\def\Xint#1{\mathchoice
{\XXint\displaystyle\textstyle{#1}}%
{\XXint\textstyle\scriptstyle{#1}}%
{\XXint\scriptstyle\scriptscriptstyle{#1}}%
{\XXint\scriptscriptstyle%
\scriptscriptstyle{#1}}%
\!\int}
\def\XXint#1#2#3{{\setbox0=\hbox{$#1{#2#3}{%
\int}$ }
\vcenter{\hbox{$#2#3$ }}\kern-.6\wd0}}
\def\barint{\,\Xint -} 
\def\bariint{\barint_{} \kern-.4em \barint}
\def\bariiint{\bariint_{} \kern-.4em \barint}
\renewcommand{\iint}{\int_{}\kern-.34em \int} 
\renewcommand{\iiint}{\iint_{}\kern-.34em \int} 
\theoremstyle{plain}
\newtheorem{theorem}{Theorem}[section]
\newtheorem{corollary}[theorem]{Corollary}
\newtheorem{lemma}[theorem]{Lemma}
\newtheorem{proposition}[theorem]{Proposition}
\theoremstyle{definition}
\newtheorem{remark}[theorem]{Remark}
\newtheorem{question}[theorem]{Question}
\numberwithin{equation}{section}
\newtheorem*{theorem*}{Theorem}
\newcommand{\R}{{\mathbb R}}
\newcommand{\sint}{\barint}
\DeclareMathOperator{\II}{II}
\DeclareMathOperator{\I}{I}
\DeclareMathOperator{\dive}{div}
\DeclareMathOperator{\dist}{dist}
\DeclareMathOperator{\diam}{diam}
\providecommand{\m}{M^\Omega}
\author{Jo\~ao P. G. Ramos, Olli Saari and Julian Weigt}
\address{Jo\~ao Pedro Ramos, Mathematical Institute, 
	University of Bonn,
	Endenicher Allee 60, 53115, Bonn,
	Germany}
	\email{jpgramos@math.uni-bonn.de}
\address{Olli Saari, Mathematical Institute, 
	University of Bonn,
	Endenicher Allee 60, 53115, Bonn,
	Germany}
	\email{saari@math.uni-bonn.de}
\address{Julian Weigt, Department of Mathematics and Systems Analysis, Aalto University, Finland}
	\email{julian.weigt@aalto.fi}
\subjclass[2010]{Primary: 42B25, 46E35} 
\keywords{Maximal function,
Sobolev space,
spherical means,
domains}
\date{\today}
\title[Local fractional maximal function]{Weak differentiability for fractional maximal functions of general $L^{p}$ functions on domains}
\begin{document}
\begin{abstract}
Let $\Omega \subset \mathbb{R}^{n}$ be bounded a domain. We prove under certain structural assumptions that the fractional maximal operator relative to $\Omega$ maps $L^{p}(\Omega) \to W^{1,p}(\Omega)$ for all $p > 1$, when the smoothness index $\alpha \geq 1$. In particular, the results are valid in the range $p \in (1, n/(n-1)]$ that was previously unknown. As an application, we prove an endpoint regularity result in the domain setting.
\end{abstract}

\maketitle

\section{Introduction}
Regularity of the Hardy--Littlewood maximal function of a Sobolev function was first studied in \cite{Kinnunen1997}. It was shown that the maximal operator preserves $W^{1,p}(\mathbb{R}^{n})$ regularity for $p>1$. This continues to hold true at the derivative level when $p=1$ and $n=1$ \cite{Tanaka2002,Kurka2010} and for radial functions \cite{Luiro2018}. Extending such a statement to more general Sobolev functions of several variables is a difficult open problem, which has inspired many results in related topics. For instance, slightly stronger bounds have been proved for maximal operators with more special convolution kernels (see \cite{CS2013}, \cite{CFS2018}, \cite{CR2019} and \cite{PPSS2018}), the continuity of the mapping has been studied in \cite{Luiro2007} and \cite{CMP2017}, and a part of the techniques used for continuity, also relevant for the current paper, have been extended to $p=1$ in \cite{HM2010}.

Another aspect of the problem is the fractional endpoint question proposed by Carneiro and Madrid \cite{CM2015}. The fractional maximal function is given by
\[M_\alpha f(x) = \sup_{r> 0} \frac{r^{\alpha}}{|B(x,r)|} \int_{B(x,r)} |f(y)| \, dy, \]
and it defines a bounded operator $L^{p}(\mathbb{R}^{n}) \to L^{q}(\mathbb{R}^{n})$ when $q = np/(n-p)$ and $p > 1$. This boundedness fails at the endpoint $p=1$, but the question about boundedness of $\nabla M_\alpha $ from  $W^{1,1}(\mathbb{R}^{n})$ to $L^{n/(n-\alpha)}(\mathbb{R}^{n})$ has not been answered so far for $\alpha < 1$ (see \cite{LM2017}, \cite{BM2019} and \cite{BRS2019} for related research and partial results). The case $\alpha \geq 1$ turned out to be very simple, and the reason can be traced back to the inequality
\begin{equation}
\label{eq:KSineq}
|\nabla M_\alpha f(x)| \leq c_{\alpha,n} M_{\alpha - 1} f(x)
\end{equation}
of Kinnunen and Saksman \cite{KS2003}. Carneiro and Madrid \cite{CM2015} noted that \eqref{eq:KSineq} together with the Gagliardo--Sobolev--Nirenberg inequality and the $L^{p} \to L^{q}$ bounds for the fractional maximal function imply the expected endpoint bound when $\alpha \geq 1$.  

In the present paper, we study these problems in general open subsets of $\mathbb{R}^{n}$, which is a natural context for analysis from the point of view of potential theory and partial differential equations. Regularity of the local Hardy--Littlewood maximal function of a Sobolev function on an open $\Omega \subset \mathbb{R}^{n}$ was first studied by Kinnunen and Lindqvist \cite{KL1998}, and a local variant of the inequality for the derivative of the fractional maximal function \eqref{eq:KSineq} was proved in \cite{HKKT2015}. This is our starting point, and for more thorough discussion of what was proved and what is unknown, we introduce some more notation.

If $\Omega \subset \mathbb{R}^{n}$ is an open set, the local fractional maximal function is defined as 
\[\m_{\alpha}f(x) = \sup_{0 < r < \dist(x, \Omega^c) } \frac{r^{\alpha}}{|B(x,r)|} \int_{B(x,r)} f(y) \, dy.\]
As the boundary of $\Omega$ restricts the choice of $r$ in the definition, one cannot expect \eqref{eq:KSineq} to trivially carry over to the local setting. Indeed, such a pointwise inequality is false in general (Example 4.1 in \cite{HKKT2015}). On the other hand, if one adds a correction term involving the surface measure of the sphere to the right hand side of \eqref{eq:KSineq}, one obtains
\begin{equation}
\label{eq:introHKKT}
|\nabla \m_\alpha f(x)| \leq c_{\alpha,n} \left( M_{\alpha - 1} f(x) +   \sup_{r > 0} |r^{\alpha-1} \sigma_r * f(x)|\right) ,
\end{equation}
which is valid in all domains. This was used in \cite{HKKT2015} to prove that $L^{p}$ functions with $p > n/(n-1)$ large enough have $\m_\alpha f$ in a first order Sobolev class. The lower bound on $p$ rules out functions too singular for an application of a spherical maximal function argument. 

Our main theorem shows that under suitable assumptions on the domain $\Omega$, the maximal function $\m_\alpha$ maps $L^{p}(\Omega)$ into a first order Sobolev space for all $p>1$.
\begin{theorem}
\label{thm:intro}
Let $\Omega \subset \mathbb{R}^{n}$ be open, $n \geq  2$, $p> 1$ and $f \in L^{p}(\Omega)$. Then $\m _\alpha f $ is weakly differentiable and 
\[\| \nabla \m _\alpha f \|_{L^{p}(\Omega)} \leq C  \| f \|_{L^{p}(\Omega)} \]
if any one of the following holds:
	\begin{enumerate}[label={(\arabic*)}]
	\item $\alpha > 1$ and $\Omega$ is bounded.\label{it:alphag1}
	\item $\alpha = 1$ and $\Omega^c$ is convex.\label{it:omegacc}
	\item $\alpha = 1$ and $\Omega$ is bounded and satisfies a uniform curvature bound in the sense of Section \ref{sec:dom}.\label{it:omegaib}
	\item $\alpha = 1$ and $p > 1 + \frac{1}{n}$.\label{it:pg11n}
\end{enumerate}
	The constant $C$ depends on the dimension, and in \ref{it:alphag1} and \ref{it:omegaib} it also depends on $\alpha$ and the domain.
\end{theorem}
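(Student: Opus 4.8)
The plan is to begin with the pointwise inequality \eqref{eq:introHKKT}, which is valid in every domain and reduces Theorem \ref{thm:intro} to two $L^{p}(\Omega)$ bounds: one for the fractional maximal function $M_{\alpha-1}f$ and one for the local spherical maximal function
\[
\s{S}f(x) := \sup_{0<r<\dist(x,\Omega^c)} r^{\alpha-1}\abs{\sigma_r * f(x)}, \qquad x\in\Omega.
\]
The term $M_{\alpha-1}f$ is the easy one: for $\alpha=1$ it is the Hardy--Littlewood maximal operator, bounded on $L^{p}(\Omega)$ for every $p>1$, while for $\alpha>1$ with $\Omega$ bounded the weight obeys $r^{\alpha-1}\le(\diam\Omega)^{\alpha-1}$, so $M_{\alpha-1}f\le(\diam\Omega)^{\alpha-1}Mf$ and the same bound applies. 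Hence everything rests on estimating $\s{S}$.

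To handle $\s{S}$ I would decompose $\Omega$ according to the distance to the boundary, setting $\Omega_j=\{x\in\Omega: 2^{-j}\le \dist(x,\Omega^c)<2^{-j+1}\}$. On $\Omega_j$ the admissible radii satisfy $r\lesssim 2^{-j}$, so every relevant sphere $\partial B(x,r)$ lies in a fixed neighbourhood $N_j\subset\Omega$ of $\Omega_j$ of width $\sim 2^{-j}$; crucially the $N_j$ have bounded overlap, and for $x\in\Omega_j$ one may replace $f$ by $f\,\chi_{N_j}$. Rescaling $\Omega_j$ to unit scale and using that the normalized spherical mean $\sigma_r*f$ is invariant under this scaling, the estimate on each $\Omega_j$ is reduced to a single dyadic scale of the spherical maximal operator (the weight $r^{\alpha-1}$ being kept explicit when $\alpha>1$).

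The analytic heart is then the single-scale (local) spherical maximal bound $\norm{\sup_{r\sim 1}\abs{\sigma_r*g}}_{L^{p}(\R^n)}\le C\norm{g}_{L^{p}(\R^n)}$, valid for $p>(n+1)/n=1+\tfrac1n$ by local smoothing estimates for the spherical average (in the plane this is the local circular maximal function). Summing these single-scale bounds against the dyadic pieces, using the bounded overlap of the $N_j$ and $r^{\alpha-1}\le 1$ for $\alpha\ge 1$, yields $\norm{\s{S}f}_{L^{p}(\Omega)}\lesssim\norm{f}_{L^{p}(\Omega)}$ at once in case \ref{it:pg11n}, and in case \ref{it:alphag1} for $p>1+\tfrac1n$. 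To reach the full range $p>1$ in case \ref{it:alphag1} I would exploit the genuine gain $r^{\alpha-1}\sim 2^{-m(\alpha-1)}$ at scale $2^{-m}$: replacing the $L^{p}\to L^{p}$ single-scale bound by the $L^{p}\to L^{q}$ improving estimates for the fixed-scale spherical average (available with some $q>p$ down to $p$ close to $1$) and balancing the resulting scale factors against $2^{-m(\alpha-1)}$, the boundedness of $\Omega$ makes the geometric series in $m$ and $j$ converge.

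The main obstacle is the range $p\in(1,1+\tfrac1n]$ when $\alpha=1$, where the single-scale spherical maximal operator is no longer $L^{p}$-bounded and there is no weight to spend. This is exactly where the geometric hypotheses \ref{it:omegacc} and \ref{it:omegaib} enter: one must show that spheres $\partial B(x,r)$ with $r<\dist(x,\Omega^c)$ cannot focus near the boundary. When $\Omega^c$ is convex, or more generally when $\partial\Omega$ satisfies a uniform curvature bound, the tangency of these spheres to the boundary is controlled, and I would extract from this transversality an improved single-scale estimate, in effect comparing the relevant averages to a codimension-one Hardy--Littlewood maximal function, which is bounded on every $L^{p}$, $p>1$. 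Carrying out this geometric reduction quantitatively and uniformly across the dyadic scales $\Omega_j$ is the hard part. Finally, once the a priori bound $\norm{\nabla\m_\alpha f}_{L^{p}(\Omega)}\le C\norm{f}_{L^{p}(\Omega)}$ is established for a dense class, weak differentiability of $\m_\alpha f$ for general $f\in L^{p}(\Omega)$ follows by a standard approximation and weak-compactness argument, using the sublinearity $\abs{\m_\alpha f-\m_\alpha g}\le\m_\alpha\abs{f-g}$ together with the $L^{p}$-boundedness of $\m_\alpha$ on the bounded domain to pass to the limit.
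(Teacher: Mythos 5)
Your reduction has a fatal gap, and it lies at the very first step. The operator you propose to bound,
\[
\s{S}f(x)=\sup_{0<r<\delta(x)} r^{\alpha-1}\,\abs{\sigma_r * f(x)},
\]
is genuinely unbounded on $L^{p}(\Omega)$ for every $p<n/(n-1)$, for \emph{every} domain $\Omega$ and every $\alpha\geq1$, so no refinement of the geometric analysis can make your plan reach the range $1<p\leq n/(n-1)$ --- which is exactly the new content of the theorem. The obstruction is interior focusing: pick a ball $B(y_0,2\rho)\subset\Omega$ and let $f=1_{B(y_0,\delta)}$ with $\delta\to0$. For every $x$ with $|x-y_0|\sim\rho$ the radius $r=|x-y_0|$ is admissible (it is below $\delta(x)$), the sphere $\partial B(x,r)$ passes through $y_0$, so $\sigma_r*f(x)\gtrsim(\delta/\rho)^{n-1}$, while the weight $r^{\alpha-1}\sim\rho^{\alpha-1}$ is harmless. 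Hence $\|\s{S}f\|_{L^{p}(\Omega)}\gtrsim_{\rho}\delta^{n-1}$ against $\|f\|_{L^{p}}=\delta^{n/p}$, which forces $p\geq n/(n-1)$. Because the focusing happens in the interior, where the constraint $r<\delta(x)$ and any convexity or curvature hypothesis on $\partial\Omega$ are irrelevant, cases \ref{it:omegacc} and \ref{it:omegaib} cannot be rescued by transversality near the boundary, and case \ref{it:alphag1} is not helped by the weight $r^{\alpha-1}$. For the same reason your claimed single-scale bound $\|\sup_{r\sim1}|\sigma_r*g|\|_{L^{p}(\R^n)}\lesssim\|g\|_{L^{p}(\R^n)}$ for $p>1+\frac1n$ is false: the single-scale spherical maximal operator is unbounded for all $p<n/(n-1)=1+\frac1{n-1}>1+\frac1n$ (same example), and no local smoothing estimate can go below that. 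So, as written, your argument yields only $p>n/(n-1)$, i.e.\ precisely the previously known result of \cite{HKKT2015}, and none of the four cases in full. (The exponent $1+\frac1n$ in case \ref{it:pg11n} does not come from any spherical maximal theorem; in the paper it arises from interpolating a weighted $L^{1}$ bound with an $L^{\infty}$ bound.)

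The loss is built into your starting point: passing to \eqref{eq:introHKKT} discards both the fact that on the constrained set only the \emph{single} radius $\delta(x)$ occurs, and, crucially, the angular weight. The paper never takes a supremum over radii there: at constrained points $\m_\alpha f = A_\alpha f$ is a linear average at radius $\delta(x)$, and differentiating it (Proposition \ref{prop:derivative_averaging}) produces the weighted spherical average $B_\alpha$ of \eqref{def:surfavw}, carrying the factor $|y-b_x|/\delta(x)$, the sine of the angle between $y-x$ and the direction to the nearest boundary point. This weight damps exactly the tangential incidences and annihilates the interior focusing example above (there $b_x$ is far from the support of $f$, so the weight is not small, but in the dual picture it is the tangential pieces of the sets $P(y)$ that cause the $L^{1}$ blow-up, and the weight cancels it). The paper then decomposes $B_\alpha$ in the angle $j$ and the boundary-distance scale $k$, proves $L^{1}$ bounds by a Fubini-type duality over the sets $P(y)=\partial E(y)$, which are boundaries of convex sets (Propositions \ref{prop:convexbody}, \ref{prop:L1bound}, \ref{prop:generaldomain}, \ref{prop:rough}), interpolates with trivial $L^{\infty}$ bounds (Proposition \ref{prop:infinity}), and uses the hypotheses \ref{it:alphag1}--\ref{it:pg11n} only to sum over $k$ (Theorem \ref{thm:largealpha}, Proposition \ref{prop:annulus}, Theorem \ref{thm:alphaone}); weak differentiability of the full maximal function is then assembled in Lemma \ref{lemma:full} via Proposition \ref{prop:unconstrained}. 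To salvage your outline you would have to replace $\s{S}$ by this weighted, fixed-radius operator: the constrained/unconstrained splitting and the retention of the weight are not technical conveniences but the reason the theorem can hold at all below $p=n/(n-1)$.
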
 
Unlike \cite{HKKT2015}, we are not able to prove an $L^{p} \to L^{q}$ smoothing effect on top of winning one derivative. However, our method does apply to singular functions in $L^{p}$ spaces with $1 \leq p \leq n/(n-1)$ where the argument in \cite{HKKT2015} fails to give any result. In particular, we have the following endpoint regularity result, which was previously out of reach. 

\begin{corollary}
\label{cor:intro2}
Let $\Omega \subset \mathbb{R}^{n}$ be a Lipschitz domain. Then for all $f \in W^{1,1}(\Omega)$
\[ \| \nabla \m _1 f \|_{L^{n/(n-1)}(\Omega)} \leq C \|  f \|_{W^{1,1}(\Omega)}  \]
where the constant $C$ only depends on $\Omega$ and the dimension. 
\end{corollary}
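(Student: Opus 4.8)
The plan is to deduce Corollary~\ref{cor:intro2} directly from case~\ref{it:pg11n} of Theorem~\ref{thm:intro}, combined with the Gagliardo--Nirenberg--Sobolev embedding; this is the domain-setting analogue of the Carneiro--Madrid strategy sketched in the introduction, except that the role of \eqref{eq:KSineq} together with the $L^{p}\to L^{q}$ bounds is now played wholesale by our main theorem, so that all the delicate spherical-maximal-function content is already absorbed. The key numerical observation I would record first is that the target exponent $n/(n-1)$ lies strictly above the threshold $1+1/n$ in \ref{it:pg11n}: indeed
\[
\frac{n}{n-1}-\frac{n+1}{n}=\frac{n^{2}-(n^{2}-1)}{n(n-1)}=\frac{1}{n(n-1)}>0,
\]
so $p=n/(n-1)>1+\tfrac1n$ for every $n\ge 2$. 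This is exactly what allows the general-domain estimate to take over precisely at the endpoint Sobolev exponent.

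Granting this, I would argue as follows. Since $\Omega$ is a Lipschitz domain there is a bounded extension operator $E\colon W^{1,1}(\Omega)\to W^{1,1}(\mathbb{R}^{n})$, and the Gagliardo--Nirenberg--Sobolev inequality on $\mathbb{R}^{n}$ gives
\[
\|f\|_{L^{n/(n-1)}(\Omega)}\le \|Ef\|_{L^{n/(n-1)}(\mathbb{R}^{n})}\le C\,\|\nabla(Ef)\|_{L^{1}(\mathbb{R}^{n})}\le C\,\|f\|_{W^{1,1}(\Omega)}.
\]
In particular $f\in L^{n/(n-1)}(\Omega)$, so $\m_{1}f$ is well defined and Theorem~\ref{thm:intro}\ref{it:pg11n} applies with the admissible exponent $p=n/(n-1)$, yielding weak differentiability of $\m_{1}f$ and
\[
\|\nabla \m_{1} f\|_{L^{n/(n-1)}(\Omega)}\le C\,\|f\|_{L^{n/(n-1)}(\Omega)}.
\]
Chaining the two displayed estimates produces the claimed bound $\|\nabla \m_{1} f\|_{L^{n/(n-1)}(\Omega)}\le C\,\|f\|_{W^{1,1}(\Omega)}$, with $C$ depending only on $\Omega$ and $n$.

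I do not anticipate a genuine obstacle, since the substantive work is already contained in Theorem~\ref{thm:intro}: the corollary amounts to matching the Sobolev exponent to the range of validity of \ref{it:pg11n}. The only step requiring care is the embedding $W^{1,1}(\Omega)\hookrightarrow L^{n/(n-1)}(\Omega)$, and this is exactly where the Lipschitz hypothesis enters, through the bounded extension operator; on a general open set this embedding can fail and the argument would break down precisely there.
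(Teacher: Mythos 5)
Your proof is correct and takes essentially the same route as the paper: the paper also deduces the corollary by applying Theorem \ref{thm:intro} with $p = n/(n-1)$ (necessarily via case \ref{it:pg11n}, since $n/(n-1) > 1 + 1/n$, as a Lipschitz domain need not satisfy \ref{it:omegacc} or \ref{it:omegaib}) and then chaining with the embedding \eqref{eq:GNSineq}, which is itself obtained from the Lipschitz extension operator and the Gagliardo--Nirenberg--Sobolev inequality exactly as in your argument. Your explicit verification of the exponent comparison and of where the Lipschitz hypothesis enters simply makes explicit what the paper leaves implicit.
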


We briefly outline the proof of the main theorem. The maximal function on a domain behaves differently depending on whether the ball attaining the maximum touches the boundary or not. In case it does not, the local maximal function behaves like the global one, and the analysis is very similar. Otherwise it coincides with a linear averaging operator \eqref{def:solav}, which depends on the domain. These two parts are analyzed separately, and the main part of the proof is to establish $L^{p}$ bounds for the derivative of \eqref{def:solav}. This leads to studying a domain dependent weighted spherical averaging operator \eqref{def:surfavw}.

Instead of resorting to maximal averages and the Bourgain--Stein theorem, an angular decomposition of the operator is carried out. The additional geometric information allows instead to establish good $L^{1}$ bounds that can be interpolated with trivial $L^{\infty}$ bounds in order to obtain a domination of \eqref{def:surfavw} by a converging sum of $L^{p}$ bounded operators. Improving the $L^{1}$ bound over what follows from the behaviour of generic spherical means is crucial when aiming at $L^{p}$ bounds for all $p> 1$. Such a conclusion cannot be drawn from mere polynomial decay of the Fourier transform of the weighted spherical measure in question, if no additional $L^{1}$ information is taken into account. 
Turning the focus from the Littlewood--Paley decomposition and $L^{2}$ methods to an angular decomposition and geometric estimates in $L^{1}$ is the leading insight of the proof. 

The key idea in the $L^1$ estimates can be described as follows.
Each domain $\Omega$ comes endowed with a family of sets  (Figure \ref{fig:PySet})
\[ \{P(y):y \in \Omega\} , \quad P(y) = \{ x \in \Omega: y \in \partial B(x, \dist(x,\Omega^c)) \} ,\]  
which can morally be used to dualize the spherical averaging operators \eqref{def:surfavw} through Fubini's theorem. 
The $L^{1}$ bounds for the constituents in the angular decomposition of the spherical averaging operator correspond to weighted integrals over the pieces of $P(y)$. If $\Omega$ is a ball, then the sets $P(y)$ are ellipsoids with foci at the center of the ball and at $y$. In the cases of the complement of a ball and a half-space, the $P(y)$ take the simple forms of hyperboloids and paraboloids. One cannot hope for as explicit descriptions as that in more general domains, but all $P(y)$ are boundaries of convex sets. This observation is used extensively in the proof. 

\begin{figure}
\centering
\includegraphics[width=0.65\textwidth]{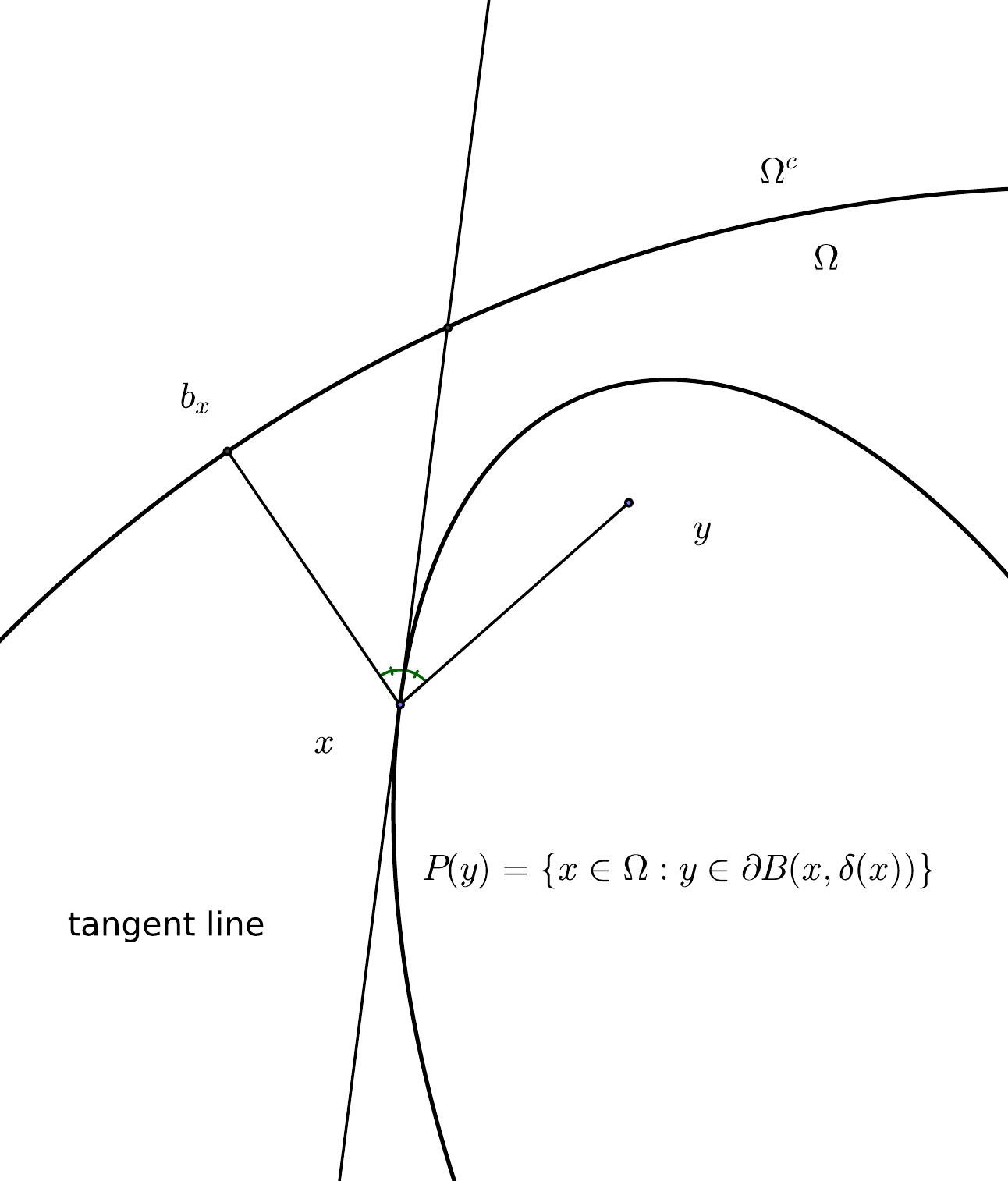}
\begin{caption}
{\label{fig:PySet} A set $P(y)$ and a tangent line.}
\end{caption}
\end{figure}

The structure of the paper is as follows.
In the first section, we introduce notation and some tools that will be helpful throughout the proof. The first  sections are about differentiating the maximal function on so-called unconstrained points and proving the weak differentiability of the maximal function conditionally
to the $L^{p}$ boundedness of the averaging operator \eqref{def:solav}. The rest of the paper is devoted to proving those $L^{p}$ bounds by first computing a formula for the derivative and then carrying out the strategy sketched above. Finally, there is a concluding section with remarks on open problems and certain observations about the proof which might be of independent interest for future research.

\bigskip

\noindent
\textit{Acknowledgement.} 
We would like to thank Juha Kinnunen for turning our attention to the error term in \cite{HKKT2015} and Mart\'i Prats for comments and corrections to the previous version of the paper. Part of the research was done when the second author was visiting the Nonlinear PDE group at Aalto University, which he wishes to thank for its hospitality. JPGR was supported by the Deutscher Akademischer Austauschdienst (DAAD). OS was supported by Hausdorff Center for Mathematics DFG EXC 2047 and the project DFG SFB 1060. JW was supported by the V\"ais\"al\"a Foundation and the Academy of Finland.

\section{Preliminaries}
\subsection{Notation}
We let $n \geq 1$ denote the dimension. For a measurable set $E$, we let $| E |$ denote the $n$-dimensional Lebesgue measure. The $k$-dimensional Hausdorff measures are denoted by $\mathcal{H}^{k}$. An Euclidean ball with center $x \in \mathbb{R}^{n}$ and radius $r > 0$ is denoted by $B(x,r)$. A finite constant only depending on quantities that are not being kept track of is denoted by $C$. If $A \leq C B$ for such constant, we denote $A \lesssim B$ or write $A$ is $\lesssim B$. We write $A \sim B$ if both $A \lesssim B$ and $B \lesssim A$ hold.
 
\subsection{Domains}
\label{sec:dom}
We always assume $\Omega \subset \mathbb{R}^{n}$ to be an open set, which we interchangeably call domain as the distinction obviously plays no role in this paper. We assume it to have non-empty complement. 
The distance function is denoted by $\delta(x) = \dist(x, \Omega^c)$.
As $\Omega^c$ is closed, there exists at least one $b_x \in \Omega^c$ so that $|x-b_x| = \delta(x)$. We reserve the notation $b_x$ for such a point, which need not be unique unless $\Omega^c$ is convex. 
The distance function $\delta: \Omega \to [0, \infty)$ is always $1$-Lipschitz. The gradient exists almost everywhere by Rademacher's theorem, and it holds that 
\begin{equation}\label{eq:graddelta}
\nabla \delta(x) = \frac{x-b_x}{\delta(x)}. 
\end{equation}
This is because clearly the one sided directional derivative of $\delta(x)$ in the direction of $b_x-x$ always exists and is $-1$. Where the gradient exists, we can use $|\nabla\delta(x)|\leq 1$ to conclude that the directional derivative in all directions orthogonal to $x-b_x$ must be zero.

A domain is said to satisfy a uniform curvature bound if there is an $R > 0$ so that for every point $x \in \Omega$ it holds 
\[B\Bigl( b_x +R \frac{x-b_x}{\delta(x)} , R\Bigr) \subset\Omega.\]
All bounded $C^{2}$ domains satisfy this condition, but a domain satisfying a uniform curvature bound might be non-smooth and have inwards-pointing cusps. A domain with an interior ball condition need not satisfy the uniform curvature bound. An example of such a domain is $B(0,1) \setminus ( \{0\} \cup \{(0, \ldots, 0, 2^{k}): k \in \mathbb{Z}, k < 0 \} )$.

%

\subsection{Function spaces on domains}
Functions $f \in L^{p}(\Omega)$ are a priori only defined in the domain $\Omega$, but we always extend them by zero to $\mathbb{R}^{n}$ without additional comments. The Sobolev class $W^{1,p}(\Omega)$ consists of functions $f \in L^{p}(\Omega)$ such that $|\nabla f| \in  L^{p}(\Omega)$. The weak derivatives are defined using test functions in $C^{\infty}_{c}(\Omega)$. 

For the application of the main theorem to the endpoint regularity problem, we need a Sobolev embedding theorem for domains. One concrete case we can deal with is that of a Lipschitz domain.
\begin{proposition}[Section 4.4 in \cite{EG1992}]
Let $\Omega \subset \mathbb{R}^{n}$ be a bounded open set so that $\partial \Omega$ is Lipschitz. Then for every $1 \leq p < \infty$ there exists a bounded extension operator
\[E: W^{1,p}(\Omega) \to W^{1,p}(\mathbb{R}^{n})\]
such that ${\rm supp} (Ef) \subset B(x_0 , 2 \diam(\Omega)) $ for some $x_0 \in \Omega$ and all $f \in W^{1,p}(\Omega)$.
\end{proposition}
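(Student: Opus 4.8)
\emph{Proof proposal.} The plan is to reduce the global extension to finitely many local problems near the boundary, each handled by flattening and reflecting, and then to glue with a partition of unity. Since $\Omega$ is bounded, $\partial\Omega$ is compact, and the Lipschitz hypothesis gives, for each boundary point, a neighborhood in which (after a rotation) $\Omega$ is the region lying above the graph of a Lipschitz function. First I would extract a finite cover $U_1,\dots,U_N$ of $\partial\Omega$ in which $\Omega\cap U_i=\{(x',x_n)\in U_i: x_n>\gamma_i(x')\}$ for Lipschitz $\gamma_i$, together with an open set $U_0$ with $\cl(U_0)\subset\Omega$ so that $\{U_0,\dots,U_N\}$ covers $\cl\Omega$, and fix a smooth partition of unity $\{\psi_i\}_{i=0}^N$ subordinate to it. It then suffices to extend each piece $\psi_i f$ and sum, all the operations below being linear.

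The interior piece $\psi_0 f$ has compact support in $\Omega$, so its extension by zero lies in $W^{1,p}(\R^n)$ with $\norm{\psi_0 f}_{W^{1,p}}\lesssim\norm{f}_{W^{1,p}(\Omega)}$ by the product rule, using that $\nabla\psi_0$ is bounded. For a boundary piece I would straighten the graph by the bi-Lipschitz map $\Phi_i(x',x_n)=(x',x_n-\gamma_i(x'))$, which carries $\Omega\cap U_i$ into the half-space $\{y_n>0\}$ and has bi-Lipschitz inverse. Since precomposition with a bi-Lipschitz map preserves $W^{1,p}$, with norm controlled by the Lipschitz constants of $\Phi_i,\Phi_i^{-1}$ (via the chain rule for Sobolev functions, the change-of-variables formula, and the almost-everywhere bound on $\nabla\gamma_i$), it is enough to extend the transported function $g=(\psi_i f)\circ\Phi_i^{-1}$, a $W^{1,p}$ function supported in the closed upper half-space.

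For the half-space I would use even reflection $Eg(y',y_n)=g(y',|y_n|)$. The main point, which I expect to be the crux, is to verify that $Eg\in W^{1,p}(\R^n)$ with weak gradient equal to the \emph{reflected pointwise gradient and no singular contribution on the hyperplane} $\{y_n=0\}$. This is precisely where the Sobolev (not merely $L^p$) regularity of $g$ enters: because the trace of $g$ from above agrees with the trace of its reflection from below, the boundary terms in the integration-by-parts identity tested against $\varphi\in C_c^\infty(\R^n)$ cancel, so no distribution concentrated on $\{y_n=0\}$ appears. I would make this rigorous either by approximating $g$ in $W^{1,p}(\{y_n>0\})$ by functions smooth up to the boundary and passing to the limit, or by directly computing $\int_{\R^n} Eg\,\partial_j\varphi$, splitting over the two half-spaces and integrating by parts in each; this yields $\norm{Eg}_{W^{1,p}(\R^n)}\leq 2^{1/p}\norm{g}_{W^{1,p}(\{y_n>0\})}$.

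Transporting back by $\Phi_i^{-1}$ and summing the $N+1$ pieces produces a linear, bounded extension operator $E:W^{1,p}(\Omega)\to W^{1,p}(\R^n)$. Finally, to arrange the support condition I would fix any $x_0\in\Omega$, note $\Omega\subset B(x_0,\diam\Omega)$, choose a smooth cutoff $\chi$ equal to $1$ on $\cl\Omega$ and supported in $B(x_0,2\diam\Omega)$, and replace $Ef$ by $\chi\cdot Ef$. Since $\chi\equiv 1$ on $\Omega$, this does not alter the extension on $\Omega$, and multiplication by the bounded smooth $\chi$ changes the $W^{1,p}(\R^n)$ norm only by a constant. The remaining obstacles are routine: the bi-Lipschitz invariance of $W^{1,p}$ and the product-rule estimates are standard, so the entire weight of the argument rests on the trace-matching verification for the reflection described above.
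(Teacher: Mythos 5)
This proposition is not proved in the paper at all; it is imported from Section 4.4 of \cite{EG1992}, so there is no internal argument to compare against. Your proof is the standard one and is essentially the argument of the cited reference: localize by a partition of unity subordinate to finitely many graph neighborhoods, reduce to a Lipschitz graph, extend by reflection, glue, and cut off to enforce the support condition (Evans--Gariepy reflect directly across the graph $x_n \mapsto 2\gamma_i(x')-x_n$; conjugating your flat reflection by the flattening map $\Phi_i$ yields exactly that, so the two are the same construction). One routine point you should make explicit: after reflecting and pulling back by $\Phi_i^{-1}$, the resulting function can be nonzero at points of $\Omega \setminus U_i$, since the reflected support may leave $\Phi_i(U_i)$ and the (McShane-extended) $\gamma_i$ bears no relation to $\partial\Omega$ there; as written, the sum of your pieces need not restrict to $f$ on $\Omega$. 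The fix is to multiply each transported extension by a cutoff $\eta_i \in C^\infty_c(U_i)$ with $\eta_i \equiv 1$ on ${\rm supp}\,\psi_i$, after which the identity on $\Omega$ is immediate and the rest of your argument, including the trace-matching verification for the reflection, is sound.
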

By the boundary being Lipschitz, we mean that it can be covered by a finite number of open balls $B_i$ so that for each $i$ the domain $B_i \cap \Omega$ is the epigraph of a Lipschitz function. 

The proposition together with the Gagliardo--Nirenberg--Sobolev inequality (see e.g. Section 4.5.1 in \cite{EG1992}) implies a rudimentary local Sobolev embedding 
\begin{equation}
\label{eq:GNSineq}
\| f \|_{L^{pn/(n-p)}(\Omega) } \leq C_{\Omega,p,n} \|f\|_{W^{1,p}(\Omega)}
\end{equation}
valid for all $f \in W^{1,p}(\Omega)$ whenever $\Omega$ is a bounded open set with Lipschitz boundary. This is sufficient for our purposes.

\subsection{Maximal function}
For $\alpha \in [1,n)$, define the local fractional maximal function relative to $\Omega$ as
\begin{equation*}
\m_{\alpha}f(x) = \sup_{0 < r < \delta(x)} r^{\alpha} \sint_{B(x,r)} f(y) \, dy
\end{equation*}
whenever $f \in L^{1}_{\text{loc}}(\Omega)$. We omit the superscript when $\Omega$ is the whole $\mathbb{R}^{n}$. In addition, we define for $\alpha \in \mathbb{R}$ the auxiliary linear operator
\begin{align}
\label{def:solav}
A_{\alpha} f (x) &  = \delta(x)^{\alpha} \sint_{B(z,\delta(x))} f(y) \, dy  
.
\end{align}

\subsection{Constrained points}
Let $f$ be continuous.
Fix $x \in \Omega$. 
Because the complement of $\Omega$ is non-empty, $\delta(x) < \infty$ and there exists a convergent sequence $r_j \in (0, \delta(x))$ with limit $r = \lim_{j \to \infty } r_j \in [0, \delta(x) ]$ such that  
\[  \m_{\alpha}f(x) = \lim_{j \to \infty}  r_j^{\alpha} \sint_{B(x,r_j)} f(y) \, dy = r^{\alpha} \sint_{B(x,r)} f(y) \, dy\]
if $r>0$.
If 
\[\m_{\alpha}f(x) >  \delta(x)^{\alpha} \sint_{B(x,\delta(x))} f(y) \, dy, \]
the sequence $r_j $ must be chosen so that $r < \delta(x)$, and the point $x$ is said to be \textit{unconstrained}. All other points are called \textit{constrained}. 

\section{The unconstrained part}
The local maximal function behaves similarly to the global one in the unconstrained set, and we reduce the differentiability question of the unconstrained part accordingly to that of the global maximal function. This is the content of the following proposition. 

\begin{proposition}
\label{prop:unconstrained}
Let $p > 1$, $\alpha \geq 1$ and $f \in L^{p}_{\text{loc}}(\Omega)$ be continuous. 
The set $U$ of the unconstrained points is open, the maximal function $\m_\alpha f$ is weakly differentiable in $U$, and the pointwise bound
\[ | \nabla \m_{\alpha} f(x) | \leq c M_{\alpha-1} f (x) \]
holds for a constant $c$ only depending on the dimension and $\alpha$ whenever $x \in U$.
\end{proposition}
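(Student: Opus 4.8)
The plan is to exploit the defining feature of an unconstrained point: the optimal radius is trapped strictly inside $(0,\delta(x))$, and this trapping is stable under small perturbations of $x$. Once this is made quantitative, the competing balls never reach $\Omega^c$, and the local maximal function can be differentiated exactly as the global one. Throughout I may assume $f\ge 0$, since the maximal operator and the target quantity $M_{\alpha-1}f$ only see $|f|$; note that then $\m_\alpha f>0$ on $U$, because $\m_\alpha f(x)=0$ with $f\ge 0$ forces $f\equiv 0$ on $B(x,\delta(x))$ and hence equality with the constrained value, excluding $x$ from $U$.

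First I would establish the openness of $U$ together with the uniform separation that drives everything else. Write $\Phi(x,r)=r^{\alpha}\sint_{B(x,r)}f$, jointly continuous on $\{(x,r):0<r\le\delta(x)\}$ because $f$ is continuous, and $g(x)=\Phi(x,\delta(x))$, which is continuous. The unconstrained set is $\{g<\m_\alpha f\}$. Fix $x_0\in U$ and $r_0\in(0,\delta(x_0))$ with $\Phi(x_0,r_0)=\m_\alpha f(x_0)>g(x_0)$. Joint continuity of $\Phi$ and of $\delta$ gives $\Phi(\cdot,r_0)>g$ and $r_0<\delta$ on a ball $V\ni x_0$, so every $x\in V$ is unconstrained; this is openness. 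The same comparison rules out maximizing radii approaching the boundary: if some sequence $x_k\to x_0$ had maximizers $s_k$ with $s_k/\delta(x_k)\to 1$, then $\m_\alpha f(x_k)=\Phi(x_k,s_k)\to g(x_0)$ while $\m_\alpha f(x_k)\ge\Phi(x_k,r_0)\to\Phi(x_0,r_0)>g(x_0)$, a contradiction. After shrinking $V$ this yields a gap $c_0>0$ with every maximizing radius at $x\in V$ at most $\delta(x)-c_0$; lower semicontinuity of $\m_\alpha f$ together with $\Phi(x,r)\to 0$ as $r\to 0$ gives, after a further shrink, a lower bound $a>0$ on maximizing radii. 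Hence on $V$ one has $\m_\alpha f=\sup_{a\le r\le b}\Phi(\cdot,r)$ with $[a,b]\subset(0,\inf_V\delta)$.

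With this reduction the weak differentiability is soft. Every competing ball $B(x,r)$, $x\in V$, $r\in[a,b]$, is compactly contained in $\Omega$, so continuity of $f$ makes each $x\mapsto\Phi(x,r)$ a $C^1$ function, with $\nabla_x\Phi(x,r)=\frac{r^{\alpha-n}}{\omega_n}\int_{\partial B(x,r)}f(y)\tfrac{y-x}{r}\,d\mathcal H^{n-1}(y)$ obtained by differentiating the moving domain; these gradients are jointly continuous and hence uniformly bounded on the compact set $\overline V\times[a,b]$. A supremum of a family of $C^1$ functions indexed by a compact set, with equibounded gradients, is locally Lipschitz, so $\m_\alpha f\in W^{1,\infty}_{\mathrm{loc}}(V)$; covering $U$ by such $V$ gives weak differentiability on $U$, and by Rademacher it suffices to prove the pointwise bound at points of classical differentiability.

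The pointwise bound is where the global theory enters, and I would run the Kinnunen–Saksman comparison \eqref{eq:KSineq} in localized form rather than estimate a single $\nabla_x\Phi(x,r)$ (which only yields a spherical average and would cost a spherical maximal function). Let $x\in V$ be a point of differentiability with maximizer $s\in[a,b]$, so $\m_\alpha f(x)=s^{\alpha}\sint_{B(x,s)}f$. For a unit vector $e$ and small $t>0$, the inclusion $B(x,s)\subset B(x+te,s+t)$ and $f\ge 0$ give $\m_\alpha f(x+te)\ge(s+t)^{\alpha}\sint_{B(x+te,s+t)}f\ge(1+t/s)^{\alpha-n}\m_\alpha f(x)$, and the symmetric inequality using a maximizer at $x+te$ controls the opposite difference; since $\alpha-n<0$, letting $t\to0^+$ yields $|\partial_e\m_\alpha f(x)|\le(n-\alpha)\,\m_\alpha f(x)/s$, and $\m_\alpha f(x)/s=s^{\alpha-1}\sint_{B(x,s)}f\le M_{\alpha-1}f(x)$, which gives $|\nabla\m_\alpha f(x)|\le c(n,\alpha)M_{\alpha-1}f(x)$. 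The decisive point — and the one I expect to be the main obstacle — is that the enlarged ball $B(x+te,s+t)$ must be \emph{admissible}, i.e.\ $s+t<\delta(x+te)$: using $s\le\delta(x)-c_0$ and the $1$-Lipschitz bound $\delta(x+te)\ge\delta(x)-t$ one checks this holds for $t<c_0/2$, so the whole comparison stays inside $\Omega$ precisely because $x$ is unconstrained. This is why the hypothesis is indispensable and why the $M_{\alpha-1}$ on the right may be taken either local or global (the relevant balls lie in $\Omega$, where the two agree). The remaining work is bookkeeping: justifying the sup-of-$C^1$ differentiation lemma, the joint continuity and uniform bounds on $\overline V\times[a,b]$, and the two-sided passage to the limit in $t$.
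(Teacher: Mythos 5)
Your proof is correct, and it shares the paper's key insight---that unconstrainedness is an open, stable condition, so that near an unconstrained point all competing balls stay a fixed distance away from $\partial\Omega$---but it executes the differentiability and the gradient bound by a genuinely different route. The paper's proof is ``localize and cite'': it shows that on a neighbourhood $U_x$ of an unconstrained point one has the identification $\m_\alpha f(z)=M_\alpha(1_{B(x,\delta(x))}f)(z)$ with the \emph{global} fractional maximal function of a truncated function, and then invokes Theorem 3.1 of \cite{KS2003}, which supplies both the weak differentiability and the bound $|\nabla M_\alpha g|\le c M_{\alpha-1}g$; the pointwise inequality $M_{\alpha-1}(1_{B(x,\delta(x))}f)\le M_{\alpha-1}f$ then finishes. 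You never perform this identification: instead you make the localization quantitative (maximizing radii trapped in a window $[a,b]$ with the gap $c_0$ below $\delta$), obtain local Lipschitz continuity as a supremum of equi-Lipschitz $C^1$ averages, and re-derive the Kinnunen--Saksman comparison by hand via the inclusion $B(x,s)\subset B(x+te,s+t)$, using the gap $c_0$ precisely where admissibility of the enlarged ball must be checked. What each approach buys: the paper's argument is shorter, but its identification step silently requires ruling out competitors of radius $r\ge\delta(z)$ for the truncated function (this does follow from unconstrainedness together with $\alpha<n$, but it is an extra verification the paper glosses over); your argument is self-contained, avoids the global theory altogether, and produces the explicit constant $c=n-\alpha$.

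Two minor points, neither a gap. First, the reduction to $f\ge0$ is legitimate under the convention, used in the paper's introduction, that maximal operators act on $|f|$; the missing absolute value in the paper's definition of $\m_\alpha$ is an inconsistency of the paper, not of your argument. Second, you can dispense with the ``symmetric inequality using a maximizer at $x+te$'': at a point of total differentiability, the one-sided bound $\partial_e\m_\alpha f(x)\ge -(n-\alpha)M_{\alpha-1}f(x)$, applied to both $e$ and $-e$, already yields $|\nabla\m_\alpha f(x)|\le(n-\alpha)M_{\alpha-1}f(x)$; this spares you the (fixable, but currently glossed-over) bookkeeping that maximizers at $x+te$ converge subsequentially to a maximizer at $x$.
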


\begin{proof}
Consider the fractional average function
\[ A(z,r) :=  r^{\alpha} \sint_{B(z,r)} f(y) \, dy  .\]
It is continuous in $(z,r) \in \Omega \times \R_{+}$. Fix now an unconstrained point $x$. By definition, there exists $\varepsilon > 0$ so that $\m_{\alpha} f(x) - A(x,\delta(x) ) > \epsilon$. Moreover, there exists $\gamma > 0$ so that if $|(z,r) - (x,\delta(x)) | < \gamma$, then $\m_\alpha f(x) - A(z,r) > \varepsilon / 2 $. Since $\m_\alpha f$ is lower semicontinuous, one can find for every $z$ close enough to $x$ a sequence $r_{z,j} \to r_z < \delta(x)- \gamma/2$  so that 
\[\m_\alpha f(z) = \lim_{j \to \infty} r_{z,j} ^{\alpha} \sint_{B(z,r_{z,j})} f(y) \, dy . \]  
In particular, there is an open neighborhood $U_x$ of $x$ so that for all $z \in U_x$ 
\[\m_\alpha f(z) = M_{\alpha}( 1_{B(x,\delta (x) ) } f ) (z). \]
By Theorem 3.1 in \cite{KS2003},
\[ | \nabla \m_{\alpha} f(x) | \leq C M_{\alpha-1} f (x) \]
follows.
\end{proof}

\section{The full maximal function}
Next we prove the differentiability of the local maximal function conditional to $L^{p}$ bounds for the derivative of the averaging operator \eqref{def:solav}. This step 
morally follows from the lattice property of Sobolev functions, but as we only know the weak differentiability of $\m_\alpha f$ in the unconstrained set, some extra work is needed. 

\begin{lemma}
\label{lemma:full}
Let $p>1$, $\alpha \geq 1$ and $\Omega \subset \mathbb{R}^{n}$ be such that $\nabla A_\alpha$ and $M_{\alpha-1}$ are bounded $L^{p}(\Omega) \to L^{p}(\Omega)$. If $f \in L^{p}(\Omega)$, then the local fractional maximal function 
is weakly differentiable and 
\[\| \nabla \m_\alpha  f  \|_{L^{p}(\Omega)} \lesssim \| f \|_{L^{p}(\Omega)}. \]
\end{lemma}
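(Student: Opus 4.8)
The plan is to localize the analysis to the two regimes already identified: the open set $U$ of unconstrained points, where Proposition \ref{prop:unconstrained} applies, and its complement, where the maximal function coincides with the averaging operator. Writing $u = \m_\alpha f$ and $v = A_\alpha f$, I would first record two facts. Since the supremum defining $\m_\alpha f$ ranges over $0 < r < \delta(x)$ while $A_\alpha f(x)$ is the limiting value as $r \to \delta(x)$, one always has $u \geq v$; and by the very definition of constrained points, $u = v$ precisely on $\Omega \setminus U$, so that $U = \{u > v\}$. Setting $g := u - v \geq 0$, the function $g$ is then weakly differentiable on the open set $U$, and by Proposition \ref{prop:unconstrained} together with the hypothesis on $\nabla A_\alpha$ the bound $|\nabla g| \leq c M_{\alpha-1} f + |\nabla A_\alpha f|$ holds a.e.\ on $U$; its right-hand side lies in $L^{p}(\Omega)$ with norm $\lesssim \|f\|_{L^{p}(\Omega)}$. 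The whole difficulty is to promote weak differentiability of $g$ from $U$ to all of $\Omega$, since a priori $\partial U$ is uncontrolled and could contribute to the distributional derivative.

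I would treat continuous $f \in L^{p}(\Omega)$ first, so that Proposition \ref{prop:unconstrained} is available and $u$, $v$, hence $g$, are continuous. The engine for the gluing is a truncation: for $\epsilon > 0$ set $g_\epsilon := (g - \epsilon)_+$. Because $g$ is continuous, nonnegative, and vanishes on $\partial U \cap \Omega$ (any boundary point of $U$ inside $\Omega$ lies outside the open set $U$, so there $g = 0$), the set $\{g \geq \epsilon\}$ is closed in $\Omega$, contained in $U$, and $g_\epsilon$ vanishes in a neighborhood of $\partial U \cap \Omega$. Consequently every point of $\Omega$ has a neighborhood on which $g_\epsilon$ is either identically zero or agrees with the Sobolev function $(g-\epsilon)_+$ on $U$, so $g_\epsilon$ is weakly differentiable on all of $\Omega$ with $\nabla g_\epsilon = \mathbf 1_{\{g > \epsilon\}} \nabla g \in L^{p}(\Omega)$. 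This is the step that sidesteps any direct analysis of $\partial U$. The gradient bound above gives $\|\nabla g_\epsilon\|_{L^{p}(\Omega)} \leq \|\nabla g\|_{L^{p}(U)} \lesssim \|f\|_{L^{p}(\Omega)}$ uniformly in $\epsilon$. Letting $\epsilon \to 0$, monotone convergence yields $g_\epsilon \to g$ in $L^{1}_{\mathrm{loc}}(\Omega)$, while reflexivity of $L^{p}$ extracts a weak limit of $\nabla g_\epsilon$; testing against $\phi \in C_c^\infty(\Omega)$ identifies this limit as the weak gradient of $g$, so $g$ is weakly differentiable with $\|\nabla g\|_{L^{p}(\Omega)} \lesssim \|f\|_{L^{p}(\Omega)}$. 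Since $v = A_\alpha f$ is weakly differentiable with $L^{p}$ gradient by hypothesis, $u = v + g$ is weakly differentiable and satisfies the asserted estimate.

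It remains to remove the continuity assumption. Given $f \in L^{p}(\Omega)$, I would approximate it in $L^{p}(\Omega)$ by continuous functions $f_k$. The operator $\m_\alpha$ is sublinear, and on every compact $K \subset \Omega$ one has the pointwise bound $\m_\alpha h \leq (\sup_K \delta)^{\alpha}\, M|h|$ on $K$, where $M$ is the Hardy--Littlewood maximal operator, bounded on $L^{p}(K)$ for $p > 1$; hence $\m_\alpha f_k \to \m_\alpha f$ in $L^{p}_{\mathrm{loc}}(\Omega)$. Combined with the uniform bound $\|\nabla \m_\alpha f_k\|_{L^{p}(\Omega)} \lesssim \|f_k\|_{L^{p}(\Omega)} \to \|f\|_{L^{p}(\Omega)}$ from the continuous case, a further weak-compactness argument identifies $\nabla \m_\alpha f$ as the weak limit of $\nabla \m_\alpha f_k$ when tested against compactly supported $\phi$, yielding weak differentiability of $\m_\alpha f$ and the final estimate.

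The main obstacle is exactly this passage from $U$ to $\Omega$: Proposition \ref{prop:unconstrained} gives no information on the constrained set, and the interface $\partial U$ is not controlled geometrically and could carry positive measure. The decisive structural input that resolves it is that $g = \m_\alpha f - A_\alpha f$ is a \emph{nonnegative continuous} function whose positivity set is exactly $U$; this is what forces the truncations $(g-\epsilon)_+$ to be supported strictly inside $U$ and away from $\partial U$, converting a delicate gluing problem into a routine weak-compactness limit.
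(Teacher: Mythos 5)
Your proposal is correct and takes essentially the same approach as the paper: both split $\m_\alpha f = A_\alpha f + (\m_\alpha f - A_\alpha f)$, exploit that the nonnegative continuous difference vanishes off the open unconstrained set $U$, truncate at height $\epsilon$ so that the truncated function is supported strictly inside $U$ where Proposition \ref{prop:unconstrained} and the hypothesis on $\nabla A_\alpha$ give a uniform $L^p$ gradient bound, pass to the limit $\epsilon \to 0$, and finally treat general $f \in L^{p}(\Omega)$ by approximation with continuous functions plus weak compactness. The only cosmetic differences are that the paper implements the truncation with $C^{1}$ approximations $F_\epsilon$ of $t \mapsto t^{+}$ and the Sobolev chain rule, whereas you use the Lipschitz truncation $(g-\epsilon)_{+}$ with a local gluing argument, and your final approximation step identifies the limit via $L^{p}_{\mathrm{loc}}$ convergence of $\m_\alpha f_k$ (using $\m_\alpha h \leq (\sup_K \delta)^{\alpha} M|h|$ on compacta) rather than the paper's appeal to global $L^{p}$ continuity of $\m_\alpha$.
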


\begin{proof}
Assume first that $f$ is continuous and compactly supported. Following the arguments in \cite{KS2003}, we infer that $\m_\alpha f$ can be seen as supremum over radii between a fixed upper and lower bound. The fractional averages are Lipschitz continuous with constants only depending on the radii, and hence their supremum is also Lipschitz. In particular, we know that $\m_\alpha f$ is continuous.

Denote by $g^{+} = \max (g,0) $ the positive part of a function $g$ and write
\[ \m_\alpha f = (\m_\alpha f - A_\alpha f)^{+} + A_\alpha f. \]
By assumption, the second term admits the desired Sobolev bounds. To deal with the other term, let $\epsilon > 0$ and define 
\[F_\epsilon (t) = \begin{cases}
 ((t - \epsilon)^{2} + \epsilon^{2} )^{1/2} - \epsilon , \quad t > \epsilon \\
 0, \quad t \leq \epsilon .
 \end{cases} \]
These functions are of class $C^{1}(\mathbb{R})$ and converge pointwise to $t \mapsto (t)^{+}$ as $\epsilon \to 0$. Moreover, as $\m_\alpha f$ and $A_\alpha f$ are continuous, $ E = \{ x \in \Omega:  F_\epsilon( \m_\alpha f (x) - A_\alpha f(x) ) > 0 \}$ has its closure contained in the open set of unconstrained points $U$. By Proposition \ref{prop:unconstrained}, the assumption on $A_\alpha$ and the chain rule for Sobolev derivatives (4.2.2 in \cite{EG1992}), we obtain for all partial derivatives $\partial_i$
\[  \partial_i F_\epsilon( \m_\alpha f  - A_\alpha f ) = ( \partial_i \m_\alpha f - \partial_i A_\alpha  ) F_\epsilon' ( \m_\alpha f  - A_\alpha f ) . \]
Taking a test function $\varphi$ and computing
\[\int_{\Omega} F_\epsilon( \m_\alpha f   - A_\alpha f  ) \partial_i \varphi \, dx =   \int_{\Omega}  ( \partial_i \m_\alpha f  - \partial_i A_\alpha f ) F_\epsilon' ( \m_\alpha f  - A_\alpha f)\varphi \, dx , \] 
we see that taking the limit $\epsilon \to 0$ proves the claim for continuous and compactly supported $f$.

To deal with the general case, let $f \in L^{p}(\Omega)$ and let $f_j$ be continuous and compactly supported functions converging to $f$ in $L^{p}$ norm. By $L^{p}$ continuity of the fractional maximal operator, $\m_\alpha f_j \to \m_\alpha f$ in $L^{p}$. As we have proved the following inequality
\[ \| \nabla  \m_\alpha f_j \|_{L^{p}(\Omega)} \lesssim \| f_j \|_{L^{p}(\Omega)}, \]
for continuous functions $f_j$, the sequence $\m_\alpha f_j$ is bounded in $W^{1,p}(\Omega)$. We can extract a weakly convergent subsequence. By taking limits along this sequence and using the uniqueness of distributional limit, we conclude the proof for general $f \in L^p(\Omega)$. 
\end{proof}

As the main theorem is a direct consequence of the previous lemma, it remains to investigate the boundedness of the operator $\nabla A_{\alpha}$ on $L^{p}(\Omega)$.
The following sections are devoted to establishing the required $L^{p}$ bounds when $\Omega$ is sufficiently well-behaved.

\section{Constrained part}
By a change of variables, we can write the averaging operator \eqref{def:solav} as
\[A_\alpha f(x) = \delta(x)^{\alpha} \sint_{B(0,1)} f(x + y \delta(x) ) \, dy.\]
This operator is linear, and as we are aiming for $L^{p}$ bounds, there is no loss of generality in restricting the attention to smooth functions.
If $x$ is a constrained point, then $\m_\alpha f(x) = A_\alpha f(x)$, which justifies our reference to $A_\alpha$ as the constrained part. Also, Lemma \ref{lemma:full} showed that $L^{p}$ bounds for the derivative of $A_\alpha f$ are enough to imply weak differentiability of the full maximal operator, so the maximal function does not play any role in what follows.
A version of the following proposition was already proved in \cite{HKKT2015}, but as we need a formula more precise than what they stated, we include the short proof for clarity.

\begin{proposition}
\label{prop:derivative_averaging}
Let $f \in C^{\infty}(\Omega) $. Then for almost every $x \in \Omega$
\[| \nabla A_\alpha f (x) | \leq c_{n, \alpha}  |A_{\alpha-1}f (x)| +  c_n \delta(x)^{\alpha - 1} \sint_{\partial B(x,\delta(x))} \frac{|y- b_x|}{\delta(x)} f(y) \, d \mathcal{H}^{n-1} (y)    \]
where $b_x \in \partial \Omega$ is a point such that $|b_x - x| = \delta(x)$. 
\end{proposition}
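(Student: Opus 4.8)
The plan is to differentiate $A_\alpha f(x) = \delta(x)^\alpha \barint_{B(0,1)} f(x + y\,\delta(x))\,dy$ directly, exploiting the rewriting in terms of a fixed unit ball so that the only $x$-dependence inside the integral enters through the two occurrences of $\delta$ and the base point $x$. Since $\delta$ is $1$-Lipschitz it is differentiable almost everywhere, and by \eqref{eq:graddelta} its gradient is the unit vector $(x-b_x)/\delta(x)$; so I would work at a point of differentiability of $\delta$ and compute $\partial_i A_\alpha f$ by the product and chain rules. Differentiating the prefactor $\delta(x)^\alpha$ produces a term $\alpha \delta(x)^{\alpha-1}(\partial_i\delta)\barint_{B(0,1)} f(x+y\delta)\,dy$, which is exactly of the form $A_{\alpha-1}f(x)$ up to the bounded factor $\alpha\,\partial_i\delta$ (bounded since $|\nabla\delta|\le 1$); this accounts for the first term $c_{n,\alpha}|A_{\alpha-1}f(x)|$ on the right-hand side.

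The remaining terms come from differentiating the integrand $f(x+y\,\delta(x))$ with respect to $x_i$. The chain rule gives $\sum_k \partial_k f(x+y\delta)\,(\delta_{ik} + y_k\,\partial_i\delta)$, so there are two contributions. The first, coming from the $\delta_{ik}$ piece, is $\delta^\alpha \barint_{B(0,1)} \partial_i f(x+y\delta)\,dy$, and the second, from the $y_k\,\partial_i\delta$ piece, is $\delta^\alpha (\partial_i\delta)\barint_{B(0,1)} y\cdot\nabla f(x+y\delta)\,dy$. The key step is to recognize that after undoing the change of variables back to $B(x,\delta(x))$, the full gradient contribution $\delta^\alpha \barint_{B} \nabla f$ can be integrated by parts: by the divergence theorem, $\barint_{B(x,\delta)} \nabla f\,dy$ equals a surface integral over $\partial B(x,\delta)$ of $f$ times the outward normal, weighted by $1/|B|$. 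The radial term involving $y\cdot\nabla f$ should be handled similarly, converting the solid average of a derivative into a boundary average via Gauss–Green, which is where the surface measure $d\mathcal H^{n-1}$ and the factor $|y-b_x|/\delta(x)$ emerge. The appearance of $|y-b_x|/\delta(x)$ (rather than a plain constant) reflects that the relevant direction is $\nabla\delta$, and on the sphere $\partial B(x,\delta(x))$ the quantity $(y-x)\cdot\nabla\delta(x)$ is comparable to $|y-b_x|/\delta(x)$ up to bounded geometric factors, so this term is bounded by $c_n\,\delta^{\alpha-1}\barint_{\partial B} \tfrac{|y-b_x|}{\delta}f\,d\mathcal H^{n-1}$.

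Collecting the pieces, the prefactor derivative yields the $A_{\alpha-1}$ term, the $\delta_{ik}$ interior-gradient term integrates by parts into a full boundary average that is again controlled by the stated surface integral (since the surface weight $|y-b_x|/\delta$ dominates the normal components that appear), and the radial term likewise lands in the surface integral. The main obstacle I anticipate is making the integration-by-parts step rigorous and tracking constants so that the \emph{signed} normal weight coming from Gauss–Green is dominated by the \emph{nonnegative} weight $|y-b_x|/\delta(x)$ in the claimed bound; this is precisely why the statement is an inequality with a specific surface weight rather than an exact identity, and why the authors emphasize wanting a formula ``more precise'' than in \cite{HKKT2015}. A secondary technical point is justifying differentiation under the integral sign and the almost-everywhere validity, but since $f\in C^\infty(\Omega)$ and we only need the estimate at points where $\delta$ is differentiable (a full-measure set), this reduces to routine dominated-convergence arguments on the fixed ball $B(0,1)$.
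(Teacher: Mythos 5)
Your setup coincides with the paper's: the product rule on $\delta(x)^\alpha$ yields the $A_{\alpha-1}$ term, and the chain rule splits the remaining derivative into the piece $\delta(x)^\alpha \sint_{B(0,1)}\partial_i f(x+\delta(x)y)\,dy$ and the radial piece $\delta(x)^\alpha(\partial_i\delta(x))\sint_{B(0,1)} y\cdot\nabla f(x+\delta(x)y)\,dy$. The gap is in the next step, where you integrate each piece by parts \emph{separately} and assert that each resulting boundary term is controlled by the weighted surface integral because ``the surface weight $|y-b_x|/\delta$ dominates the normal components that appear.'' That domination is false. The first piece alone produces $c_n\,\delta(x)^{\alpha-1}\sint_{\partial B(x,\delta(x))} f(y)\,\nu_i(y)\,d\mathcal{H}^{n-1}(y)$ with $\nu(y)=(y-x)/\delta(x)$: near $y=b_x$ the weight $|y-b_x|/\delta(x)$ vanishes while $|\nu_i(y)|\to|\partial_i\delta(x)|$, which is generically of order one, so no pointwise domination can hold. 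The radial piece likewise leaves behind a \emph{plain} unweighted spherical average (on the unit sphere the outgoing normal weight $y\cdot y$ equals $1$), multiplied by $\partial_i\delta(x)$. Your auxiliary claim that $(y-x)\cdot\nabla\delta(x)$ is comparable to $|y-b_x|/\delta(x)$ is also false: writing $y-x=\delta(x)\omega$ and $u=\nabla\delta(x)$ with $\omega,u$ unit vectors, one has $(y-x)\cdot\nabla\delta(x)/\delta(x)=\omega\cdot u$ while $|y-b_x|/\delta(x)=|\omega+u|=\sqrt{2(1+\omega\cdot u)}$; at $\omega\cdot u=0$ these equal $0$ and $\sqrt{2}$.

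The missing idea is that the two pieces must be recombined into a single vector field \emph{before} Gauss's theorem is applied, because the weight $|y-b_x|/\delta(x)$ arises from a cancellation between the two boundary terms, not from a domination. This is exactly what the paper does: with $e=\nabla A_0f(x)/|\nabla A_0f(x)|$ it applies the divergence theorem to $y\mapsto\bigl(e+y(e\cdot\nabla\delta(x))\bigr)f(x+\delta(x)y)$ on $B(0,1)$, and on the unit sphere the resulting normal weight is $y\cdot e+e\cdot\nabla\delta(x)$, which after translating to $\partial B(x,\delta(x))$ equals
\[ e\cdot\frac{(y-x)+(x-b_x)}{\delta(x)}=e\cdot\frac{y-b_x}{\delta(x)}, \]
hence is bounded in absolute value by $|y-b_x|/\delta(x)$. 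Near $b_x$ the two weights $e\cdot\nu$ and $e\cdot\nabla\delta(x)$ are each of size about one but nearly cancel, and this cancellation \emph{is} the content of the proposition; in particular, contrary to your guess, the inequality in the statement does not come from a lossy estimate of signed normal weights --- the boundary computation is an exact identity, and the only loss is $|e\cdot(y-b_x)|\leq|y-b_x|$ together with $|\nabla\delta|\leq1$. The distinction is not cosmetic: with only unweighted spherical averages, which is all your separate treatment can produce, you recover the estimate \eqref{eq:introHKKT} of \cite{HKKT2015}; the decomposition \eqref{eq:red_angle} then loses its factor $2^{-j}$, and interpolating the $L^1$ and $L^\infty$ bounds for $S_j^k$ sums in $j$ only when $p>n/(n-1)$, rather than for all $p>1$, which is the entire improvement the paper is after. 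The fix, however, is short: compute both boundary terms and add them before estimating anything.
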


\begin{proof}
Fix a point $x$. As $A_{\alpha}f(x) = \delta(x)^{\alpha} A_0f(x),$ it holds that 
\[\nabla A_{\alpha}f(x) = \alpha \delta(x)^{\alpha-1} A_0f(x) \nabla \delta (x) + \delta(x)^{\alpha}  (\nabla A_0f)(x).\]
Since $|\nabla\delta(x)|\leq1$ (cf.~\eqref{eq:graddelta}), the first summand above is bounded by $A_{\alpha -1}f(x)$. Thus it suffices to analyze the gradient of $A_0f$. Take the unit vector 
\[e=\nabla A_0f(x)/|\nabla A_0f(x)|.\] 
Then
\begin{align*}
|\nabla(A_0 f)(x)|&= ( e \cdot \nabla ) A_0 f (x)\\
	&= \sint_{B(0,1)} \big ( e +  y ( e  \cdot \nabla \delta(x)) \big ) \cdot \nabla f(x+ \delta(x) y) \, dy \\
	&= \frac{1}{\delta(x)} \sint_{B(0,1)} \dive_y \big (  (e +  y ( e  \cdot \nabla \delta(x))) f(x+ \delta(x) y) \big ) \, dy \\
	& \qquad - \frac{n e \cdot \nabla \delta (x)}{\delta(x)} \sint_{B(0,1)} f( x+ \delta(x) y) \, dy 
	 =: \I + \II.
\end{align*}
Since $|\nabla\delta (x)|\leq1$, the contribution $\delta(x)^\alpha\cdot\II$ is pointwise bounded by $n A_{\alpha-1} f$. To estimate the other term, we apply Gauss's theorem to obtain
\begin{align*}
\I &= \frac{c_n}{\delta(x)} \int_{\partial B(0,1)} y \cdot ( e + y (e \cdot \nabla \delta(x)   ) )f (x+ \delta(x) y) \, d \mathcal{H}^{n-1} (y) \\
	& = \frac{c_n}{\delta(x)} \sint_{\partial B(x,\delta (x))} \frac{(y- b_x) \cdot e }{\delta(x)} f(y) dy. 
\end{align*}
So we reach the inequality
\[| \nabla A_\alpha f (x) | \leq \alpha n  |A_{\alpha-1}(x)| + c_n \delta(x)^{\alpha - 1} \sint_{\partial B(x,\delta(x))} \frac{|y- b_x|}{\delta(x)} f(y) \, d \mathcal{H}^{n-1}(y) ,   \]
which proves the claim.
\end{proof}

Because $A_{\alpha-1}f(x)\leq\m_{\alpha-1}f(x)$, and $\m_{\alpha-1}$ satisfies the right $L^{p} \to L^{q}$ bounds, we have reduced the matter to understanding the weighted spherical average
\begin{equation}
\label{def:surfavw}
B_\alpha f(x) := \delta(x)^{\alpha - 1} \sint_{B(x,\delta(x))} \frac{|y- b_x|}{\delta(x)} f(y) \, d \mathcal{H}^{n-1}(y)
\end{equation}
on the right hand side of the conclusion of the previous proposition. The weight $|y- b_x|/\delta(x)$ measures the angle between $b_x - x$ and $y-x$ when $|y-b_x|/\delta(x)$ is small. We decompose the weighted spherical averaging operator according to the angle and location in the domain as follows. For $k\in \mathbb{Z}$, let
\[ \Omega_k = \{ x \in \Omega: 2^{k} \leq \delta(x) < 2^{k+1} \} \]
and for every point $x \in \Omega$ and integer $j \geq 0$ 
\[\omega_j(x) =  \left\lbrace y \in \partial B(x, \delta (x)) : 2^{-j} <  \frac{|y- b_x|}{\delta(x)}  \leq 2^{-j+1} \right \rbrace . \]
Define
\begin{align*}
 S_j^{k} f(x) &=  1_{\Omega_k}(x) \int_{\omega_j(x)}  f(y) \, d \mathcal{H}^{n-1}(y).
\end{align*}
Then
\begin{equation}
\label{eq:red_angle}
B_\alpha f(x) \lesssim \sum_{k \in \mathbb{Z}} \sum_{j=1}^{\infty} 2^{k(\alpha -n) - j }  S_j^{k} f (x)
\end{equation} 
and it remains to prove bounds for $S_j^{k}$ so that the right hand side sums up in $L^p$. This is done by interpolating bounds on $L^{\infty}$ and $L^{1}$. 
\begin{proposition}
\label{prop:infinity}
Let $\Omega$ be any domain. It holds that $\|  S_j^{k} \|_{L^{\infty} \to L^{\infty}} \lesssim 2^{(n-1)(k-j)},$
and consequently $ \| \sum_k  2^{k (1 -n)} S_j^{k} \|_{L^{\infty} \to L^{\infty}} \lesssim  2^{-(n-1)j} $.
\end{proposition}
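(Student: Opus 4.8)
The plan is to reduce the $L^\infty\to L^\infty$ bound to a purely geometric estimate on the surface measure of the angular piece $\omega_j(x)$. For any $f\in L^\infty(\Omega)$ one has, at each point, the trivial pointwise bound
\[ |S_j^k f(x)| = 1_{\Omega_k}(x)\Bigl|\int_{\omega_j(x)} f\, d\mathcal{H}^{n-1}\Bigr| \le \|f\|_{L^\infty}\, 1_{\Omega_k}(x)\, \mathcal{H}^{n-1}(\omega_j(x)), \]
so everything comes down to controlling $\mathcal{H}^{n-1}(\omega_j(x))$. The key observation I would use is that, since $|x-b_x|=\delta(x)$ equals the radius of the sphere $\partial B(x,\delta(x))$, the point $b_x$ lies \emph{on} that sphere. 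Consequently $\omega_j(x)$ is exactly the set of points of $\partial B(x,\delta(x))$ whose Euclidean distance to the fixed boundary point $b_x$ falls in the annulus $(2^{-j}\delta(x),\,2^{-j+1}\delta(x)]$; in particular it is contained in the spherical cap $\partial B(x,\delta(x))\cap B(b_x,2^{-j+1}\delta(x))$.

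The main step is then the elementary fact that on a sphere of radius $R$ in $\mathbb{R}^n$, the cap of points within Euclidean distance $\rho$ of a fixed point of the sphere has $\mathcal{H}^{n-1}$-measure $\lesssim \rho^{n-1}$, uniformly for $0<\rho\le 2R$. I would prove this either by parametrising the cap through the central angle $\theta$ (chordal distance $2R\sin(\theta/2)$, so a cap of chordal radius $\rho$ corresponds to $\theta\lesssim \rho/R$) and integrating the surface element, or by projecting the cap onto the tangent hyperplane at $b_x$. Applying this with $R=\delta(x)$ and $\rho=2^{-j+1}\delta(x)$, and using $\delta(x)\sim 2^k$ for $x\in\Omega_k$, gives
\[ \mathcal{H}^{n-1}(\omega_j(x)) \lesssim \bigl(2^{-j+1}\delta(x)\bigr)^{n-1} \sim 2^{(n-1)(k-j)}, \]
which is precisely the asserted bound $\|S_j^k\|_{L^\infty\to L^\infty}\lesssim 2^{(n-1)(k-j)}$.

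For the stated consequence the crucial point is that the sets $\Omega_k$ are pairwise disjoint, so for each fixed $x$ at most one summand of $\sum_k 2^{k(1-n)}S_j^k f(x)$ is nonzero, namely the unique $k$ with $x\in\Omega_k$. The per-scale bound then yields pointwise $|\sum_k 2^{k(1-n)}S_j^k f(x)| = 2^{k(1-n)}|S_j^k f(x)| \lesssim 2^{k(1-n)}2^{(n-1)(k-j)}\|f\|_{L^\infty} = 2^{-(n-1)j}\|f\|_{L^\infty}$, since the powers of $2^k$ cancel. The estimate is not deep; the only point requiring a little care is the uniformity of the cap bound up to the maximal chordal distance $2R$ (relevant when $j=1$), which is harmless because there $\rho^{n-1}\sim R^{n-1}$ is already comparable to the measure of the entire sphere.
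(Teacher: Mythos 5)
Your proposal is correct and takes essentially the same approach as the paper: the paper's proof consists of the single assertion $\mathcal{H}^{n-1}(\omega_j(x))\lesssim 2^{(n-1)(k-j)}$, which is exactly the spherical-cap estimate you spell out (using that $b_x$ lies on $\partial B(x,\delta(x))$ and $\delta(x)\sim 2^k$ on $\Omega_k$). Your explicit use of the pairwise disjointness of the $\Omega_k$ to get the second claim is also precisely what the paper leaves implicit in the word ``consequently.''
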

\begin{proof}
This follows from $\mathcal{H}^{n-1}(\omega_j(x))\lesssim 2^{(n-1)(k-j)}$.
\end{proof}

\section{$L^{1}$ bounds}

To prove $L^{1}$ bounds, we introduce some more notation. For each integer $j \geq 0$ and each point $y \in \Omega$, define 
\begin{equation}
\label{def:pysets}
P_j(y) = \{ x \in \Omega: y \in \omega_j(x) \}, \quad P(y) = \bigcup_{j=0}^{\infty} P_j(y).
\end{equation}
In addition, let 
\begin{equation}
\label{eq:ajk}
A_j^{k} = \bigcup_{x \in \Omega_k} \omega_j(x).
\end{equation}

Formally, certain weighted integrals over $P(y)$ give the adjoint operator of $B_\alpha$. A naive change of order of integration is not justified in this case, but using the decomposition of $B_\alpha$, we can make the idea precise. 
The following two propositions give effective description of $P(y)$ and provide a substitute for Fubini's theorem.
 
\begin{proposition}
\label{prop:convexbody}
Let $\Omega$ be an open set and let $y \in \Omega$. Then
	\[E(y)=\{x\in\Omega:|x-y| \leq \delta(x)\}\]
	is closed and convex set such that
	\[P(y) = \partial E(y) .\]
For each $x \in P(y)$, the supporting hyperplane at $x$ bisects the angle between $y-x$ and $b_x - x$ and is normal to $b_x - y$.
\end{proposition}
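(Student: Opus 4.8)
The plan is to reduce everything to the single observation that $E(y)$ is an intersection of closed half-spaces. First I would unwind the definition of $P(y)$. Since $\bigcup_{j\ge 0}(2^{-j},2^{-j+1}]=(0,2]$ and, for any $x$ with $|x-y|=\delta(x)$, the triangle inequality gives $0<|y-b_x|\le |y-x|+|x-b_x|=2\delta(x)$, the shell condition defining $\omega_j(x)$ is automatically met for some $j\ge 0$. Hence
\[ P(y)=\{x\in\Omega:|x-y|=\delta(x)\}, \]
and, because the union over $j$ absorbs the only place $b_x$ entered, this set does not depend on the (possibly non-unique) choice of $b_x$.

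The key step is the rewriting of $E(y)$. Using $\delta(x)=\dist(x,\Omega^c)=\inf_{b\in\Omega^c}|x-b|$, the inequality $|x-y|\le\delta(x)$ holds if and only if $|x-y|\le|x-b|$ for \emph{every} $b\in\Omega^c$. For fixed $b$ the set $H_b:=\{x:|x-y|\le|x-b|\}$ is, after squaring, the closed half-space $\{x:x\cdot(b-y)\le(|b|^2-|y|^2)/2\}$ whose boundary is the perpendicular bisector hyperplane of the segment $[y,b]$. Therefore
\[ E(y)=\bigcap_{b\in\Omega^c}H_b, \]
an intersection of closed half-spaces, which is automatically closed and convex. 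The containment $E(y)\subseteq\Omega$ comes for free: a point $x\in\Omega^c$ would, on taking $b=x$, force $|x-y|\le 0$, impossible since $y\in\Omega$. This settles closedness and convexity.

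For $P(y)=\partial E(y)$ I would argue both inclusions from this representation. The open set $\{x\in\Omega:|x-y|<\delta(x)\}$ lies in $E(y)$, hence in its interior, so every boundary point satisfies $|x-y|=\delta(x)$, giving $\partial E(y)\subseteq P(y)$. Conversely, fix $x\in P(y)$ and a nearest point $b_x$, so $|x-y|=\delta(x)=|x-b_x|$; then $E(y)\subseteq H_{b_x}$ while $x$ lies on the bounding hyperplane $\partial H_{b_x}$, and since every neighbourhood of $x$ meets the complementary open half-space, $x$ cannot be interior, so $x\in\partial E(y)$. This argument simultaneously exhibits $\partial H_{b_x}$ as a supporting hyperplane of $E(y)$ at $x$.

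Finally, part three reads off from this supporting hyperplane. By construction $\partial H_{b_x}$ has normal direction $b_x-y$, which is the claimed normality. For the bisection, set $u:=y-x$ and $v:=b_x-x$, both of length $\delta(x)$; the internal bisector direction $u+v$ then satisfies $(u+v)\cdot(v-u)=|v|^2-|u|^2=0$, so $u+v$ is orthogonal to the normal $b_x-y=v-u$ and thus lies in $\partial H_{b_x}$, i.e.\ the supporting hyperplane contains the bisector of the angle between $y-x$ and $b_x-x$. I expect the only genuine obstacle to be spotting the half-space representation of $E(y)$: once $\delta(x)=\inf_b|x-b|$ is converted into the family of linear constraints ``$x$ is at least as close to $y$ as to $b$'', convexity, closedness, the boundary identification and the angle bisector property are all routine. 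The two minor points to treat with care are the independence of $P(y)$ from the choice of $b_x$ and the fact that boundary points genuinely lie in $\Omega$ (so that $\delta$ is defined there), both of which fall out of the half-space picture.
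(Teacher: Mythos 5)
Your proof is correct, and while it uses the same geometric objects as the paper, it derives convexity by a genuinely different route. Both arguments hinge on the perpendicular bisector hyperplanes of the segments $[y,b]$, $b \in \Omega^c$. You convert $\delta(x)=\inf_{b\in\Omega^c}|x-b|$ into the family of linear constraints $|x-y|\le|x-b|$, obtaining the representation $E(y)=\bigcap_{b\in\Omega^c}H_b$ as an intersection of closed half-spaces, from which closedness, convexity and the containment $E(y)\subset\Omega$ fall out immediately; only afterwards do you identify $\partial H_{b_x}$ as a supporting hyperplane. The paper runs the logic in the opposite direction: it exhibits $\partial H_{b_x}$ (its $\partial H_2$) as a supporting hyperplane at each point of $P(y)$ and then invokes the classical converse that a closed set with a supporting hyperplane at every boundary point is convex. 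That converse is not hypothesis-free: it requires nonempty interior and connectedness (a circle in the plane, or two disjoint closed balls, are non-convex sets satisfying the supporting-hyperplane condition), and the paper does not verify these, although they do hold here since $E(y)$ contains the open set $\{x:|x-y|<\delta(x)\}\ni y$ and is star-shaped about $y$. Your half-space representation sidesteps this issue entirely, so your argument is more self-contained; it also supplies honest proofs of both inclusions in $P(y)=\partial E(y)$, which the paper dismisses as ``easy to see'', and of the independence of $P(y)$ from the (possibly non-unique) choice of $b_x$. The final normality and bisection computation ($u+v$ orthogonal to $v-u$ when $|u|=|v|$) is in substance the same as the paper's reading-off from the definition of the bisector hyperplane.
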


\begin{proof}
Recall that $P(y)$ consists of the points with $\{x \in \Omega: |y-x|=\delta(x) \}$. For $x \in P(y)$, it holds that
\[x + \epsilon \frac{b_x -x}{|b_x-x|} \in E(y)^c ,\]
and it is easy to see $\partial E(y) = P(y)$. 
Consider the hyperplane 
\[\{z \in \mathbb{R}^{n} : |z-b_x|=|z-y| \}. \]
It divides the space into two half spaces $H_1 = \{z:|z-b_x|<|z-y|\}$ and $H_2 = \{z:|z-b_x|\geq|z-y|\}$. If $x \in P(y)$, then $E(y) \subset H_2$ and $x \in H_2$. Thus $\partial H_2$ is a supporting hyperplane for $E(y)$ at $x$. As every boundary point of $E(y)$ has a supporting hyperplane, $E(y)$ is convex. The remaining assertions readily follow from the definition of $\partial H_2$. 
\end{proof}

\begin{proposition}
\label{prop:L1bound}
Let $\Omega$ be a domain, $j \geq 0$ and $k$ integers and $f \geq 0$ a bounded continuous function on $\Omega$. Then
	\[\int_{\Omega} S_j^{k}f(x) \, dx \lesssim 2^{j} \sum_{|j'-j|\leq1,|k'-k|\leq1}\int_{A_{j'}^{k'}} f(y) \mathcal{H}^{n-1}(P_{j'}^{k'}(y))\, dy, \]
where we let $P^k_j(y) = P_j(y) \cap \Omega_k.$
\end{proposition}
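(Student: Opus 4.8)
The plan is to turn the spherical averages defining $S_j^k$ into solid integrals over thin shells, so that an honest Fubini theorem applies, and then to recognize the resulting inner integral as the volume of a one-sided collar of the convex hypersurface $P(y)$ furnished by Proposition \ref{prop:convexbody}. For $y \in \Omega$ write $g_y(x) = \delta(x) - |x-y|$, so that $E(y) = \{x : g_y(x) \geq 0\}$ and $P(y) = \{g_y = 0\} = \partial E(y)$.

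First I would thicken in the sphere variable. For small $h>0$ and $x \in \Omega$ consider the shell $\{y : \delta(x) - h < |y-x| \leq \delta(x)\} \subset \Omega$ intersected with the cone of directions from $x$ that define the band $\omega_j(x)$; call the resulting solid set $T_{j,h}(x)$. Since $f$ is bounded and continuous, polar coordinates and uniform continuity give, for each fixed $x$,
\[ \int_{\omega_j(x)} f \, d\mathcal{H}^{n-1} = \lim_{h \to 0} \frac1h \int_{T_{j,h}(x)} f(y)\, dy, \]
and the convergence is dominated on $\Omega_k$, so
\[ \int_\Omega S_j^k f\,dx = \lim_{h\to 0} \frac1h \int_\Omega 1_{\Omega_k}(x) \int_{T_{j,h}(x)} f(y)\,dy\,dx. \]
Both integrals are now $n$-dimensional, so Fubini applies to the indicator of the incidence set $\{(x,y): y \in T_{j,h}(x)\}$, turning the right-hand side into
\[ \frac1h \int_\Omega f(y)\, \big| \{ x \in \Omega_k : y \in T_{j,h}(x) \}\big|\, dy. \]

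The heart of the matter is to estimate the inner volume $V_h(y) := \big|\{x\in\Omega_k : y \in T_{j,h}(x)\}\big|$. The condition $y \in T_{j,h}(x)$ reads $g_y(x) \in [0,h)$ together with the angular constraint, so up to replacing $(j,k)$ by a neighbour $(j',k')$ (the thickening lets $y$ slip into adjacent shells and bands), the relevant set is a one-sided inner collar of $P(y)$ that lies in $\Omega_{k'}$ and in band $j'$. Two ingredients control its volume. \emph{Transversality:} where $\nabla\delta$ exists one computes, using \eqref{eq:graddelta}, that $|\nabla g_y(x)|^2 = 2 - 2\,\nabla\delta(x)\cdot \tfrac{x-y}{|x-y|} = 4\sin^2(\psi/2)$, with $\psi$ the angle between $b_x - x$ and $y-x$; the band condition forces $2\sin(\psi/2) \gtrsim 2^{-j}$, hence $|\nabla g_y| \gtrsim 2^{-j}$ almost everywhere on the collar, so that the collar has normal width $\lesssim 2^j h$. \emph{Convexity:} by Proposition \ref{prop:convexbody} the set $E(y)$ is convex, whence the distance to its boundary is concave on $E(y)$, its inner parallel bodies are nested convex sets, and the surface measures of the inner level sets of $g_y$ are controlled by that of $P(y)$. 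Feeding the transversality bound into the coarea formula for the Lipschitz function $g_y$ and using this monotonicity to compare the slices $\{g_y = t\}$, $0 \le t < h$, with $P_{j'}^{k'}(y)$, I obtain
\[ V_h(y) \lesssim 2^j h \sum_{|j'-j|\le 1,\,|k'-k|\le1} \mathcal{H}^{n-1}\big(P_{j'}^{k'}(y)\big). \]

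Inserting this, dividing by $h$ and letting $h \to 0$ yields the claim, the integration in $y$ being restricted to $\bigcup A_{j'}^{k'}$ because $V_h(y)$ vanishes off these sets. The main obstacle is exactly the collar estimate: making rigorous, for the merely Lipschitz $g_y$, that its inner level sets have surface measure comparable to $\mathcal{H}^{n-1}(P_{j'}^{k'}(y))$ and neither fold nor concentrate. This is where the convexity of $E(y)$ is indispensable, since the transversality bound by itself governs only the width of the collar and not the area of its faces; without convexity the passage from the solid collar back to the $(n-1)$-dimensional surface $P(y)$ would break down.
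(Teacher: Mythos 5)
Your reduction---thickening the spherical average into a solid shell, applying Fubini, and estimating the incidence volume $V_h(y)$---is exactly the skeleton of the paper's proof (the paper uses a mollifier $\varphi_\epsilon(\delta(x)-|x-y|)$ instead of a sharp shell, an immaterial difference). Moreover, your transversality computation is correct and is in fact sharper than what the paper uses at the corresponding stage: where $\nabla\delta$ exists one indeed has $|\nabla g_y(x)|=2\sin(\psi/2)$ with $\psi=\measuredangle(b_x-x,y-x)$, so the band condition gives $|\nabla g_y|\sim 2^{-j}$, whereas the paper only exploits the directional derivative along the ray towards $b_x$, which is $1-\cos\psi\sim 2^{-2j}$, and must recover the lost factor $2^{j}$ afterwards by a supporting-hyperplane argument. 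Feeding your gradient bound into the coarea formula to get $V_h(y)\lesssim 2^{j}\int_0^h\mathcal{H}^{n-1}(\{g_y=t\}\cap\mathrm{band}_j\cap\Omega_k)\,dt$ is a legitimate and elegant alternative route to the factor $2^j$.

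The genuine gap is the step you yourself flag as ``the main obstacle'': the bound $\mathcal{H}^{n-1}(\{g_y=t\}\cap\mathrm{band}_j\cap\Omega_k)\lesssim\sum_{|j'-j|\le1,|k'-k|\le1}\mathcal{H}^{n-1}(P_{j'}^{k'}(y))$, uniformly in $0<t<h$, is asserted rather than proved, and the mechanism you invoke for it is wrong as stated. The level sets of $g_y$ are \emph{not} inner parallel bodies of $E(y)$: precisely because $|\nabla g_y|$ varies between $\sim 2^{-j}$ and $\sim 1$ over different parts of $P(y)$, the set $\{g_y=t\}$ lies at distance $\sim 2^{j}t$ from $P(y)$ inside band $j$ but at distance $\sim t$ elsewhere, so it is not a parallel hypersurface, and concavity of $\dist(\cdot,\partial E(y))$ on $E(y)$ gives no information about it. The fact that would make your argument run is that every superlevel set $\{g_y\ge t\}=\bigcap_{b\in\Omega^c}\{x:|x-b|-|x-y|\ge t\}$ is convex (an intersection of convex regions bounded by hyperboloid sheets with foci $b$ and $y$); this is true but needs its own proof, and even granting it, the \emph{localized} comparison still requires quantitative control: you must produce, for each $z$ in the level-set piece, a point $w\in P(y)$ projecting onto $z$ with $|w-z|\lesssim 2^{2j}t$ and with the angle and the value of $\delta$ changing only by a bounded factor along the segment, so that $w$ lands in a piece with \emph{neighbouring} indices $(j',k')$. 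This is exactly the bookkeeping the paper carries out: monotonicity of the angle along the ray towards $b_x$, the estimate $\sin\measuredangle(b_x-x_0,y-x_0)\le |b_x-y|/(\delta(x)-C2^{2j}\epsilon)$, and the observation that points on the segment $[x,b_x]$ retain $b_x$ as a nearest boundary point. Without it, ``neither fold nor concentrate'' is a restatement of the difficulty, not a resolution. Note also that the paper sidesteps level-set comparison altogether: it shows every incidence point lies within $c_n2^{j}\epsilon$ of $P(y)$ itself and arbitrarily close to $\bigcup P_{j'}^{k'}(y)$, and then converts collar volume into surface measure by the Minkowski-content theorem for the rectifiable hypersurface $P(y)$ (Theorem 3.2.39 in Federer). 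To complete your proof you must either supply the convexity-plus-projection argument with the neighbour bookkeeping above, or replace the level-set comparison by this Minkowski-content step.
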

Note that $y\in A_j^k$ if and only if $P_j^k(y)\neq\emptyset$.

\begin{proof}
The parameter $k$ plays no role in the following computation, but is included in the statement for future reference. 
Let $\varphi \geq 0$ be a smooth function of one variable with compact support in $(0,1)$ and $\|\varphi\|_{L^{1}(\mathbb{R})} = 1$. Denote the $\epsilon$-dilation by $\varphi_\epsilon (t) = \epsilon^{-1} \varphi( t \epsilon^{-1})$. For any fixed $x$, we define the set of relevant directions
\[\omega_j^{\text{dir}}(x)  = \delta(x)^{-1} ( \omega_j(x) - x) \subset \partial B(0,1).\]
As $f$ is positive, the weak convergence 
\begin{align*}
  S_j^{k} f (x)	&=  \int_{\omega_j(x)} f(y) \, d\mathcal{H}^{n-1}(y)  
				=  \lim_{\epsilon \to 0} \int_{ x + \mathbb{R}  \omega_j^{\text{dir}}(x) } f (y) \varphi_{\epsilon}\left( \delta(x) - |x-y| \right) \, dy
\end{align*}
holds.
Integrating over $x$, applying the dominated convergence theorem (this is justified, see the remark at the end of the argument), and using Fubini's theorem, we obtain 
\begin{align}
\label{eq:constr1}
\int_{\Omega_k} S_jf(x) dx 
	\lesssim \int_{ A_j^{k} } f(y)  \left( \lim_{\epsilon \to 0} \frac{ |\{x \in \Omega_k :  y  \in   \omega_j^{\epsilon-}(x)    \}|}{\epsilon}  \right) \, dy 
\end{align}
where the one-sided neighborhood is defined as
	\[\omega_j^{\epsilon-}(x) = x + \omega_j^{\text{dir}}(x) ( \delta(x) - \epsilon, \delta(x) ).  \]

\begin{figure}
\centering
\includegraphics[width=0.8\textwidth]{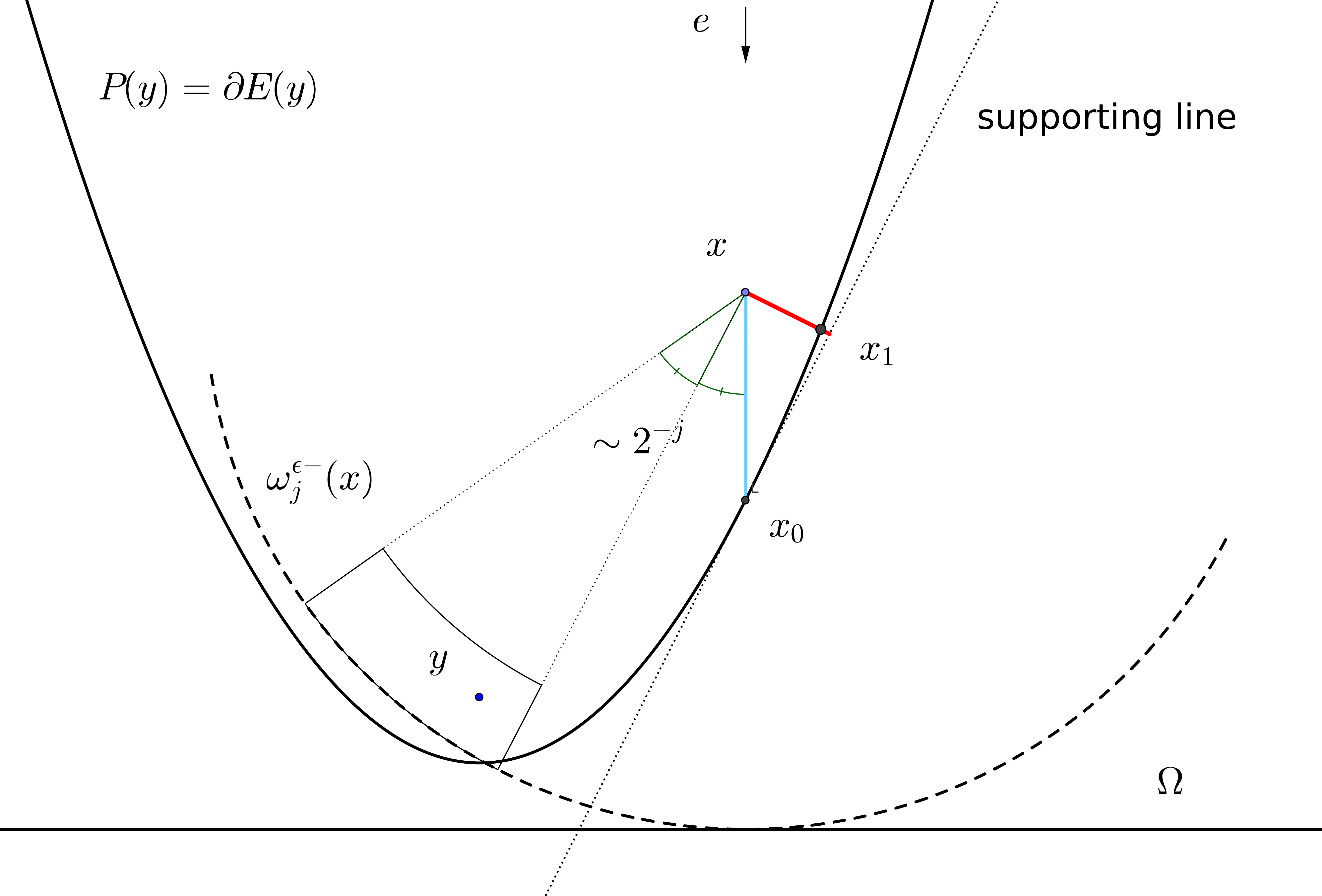}
	\caption{The construction to find $x_1$.}
\end{figure}

Next we estimate the limit expression in \eqref{eq:constr1}. As $j$ and $k$ are fixed, we can assume $\epsilon$ to be very small relative to them.  Let $x \in \Omega_k$. Assume that $y\in\omega_j^{\epsilon-}(x)$. Then
\begin{equation}
\label{eq:yinomegaeps}
-\epsilon<|y-x|-\delta(x)<0
\end{equation}
and by definition $x$ belongs to the interior of $E(y)$.

Set
\[ e = \frac{b_x-x}{|b_x-x|} \]
and let $r\in (0, \delta(x))$ be such that $x + re \in P(y)$.
Next we give an upper bound for $r$.
Because $y\in\omega_j^{\epsilon-}(x)$, it also holds that
\[ \frac{y-x}{|y-x|} \in \omega_j^{\text{dir}}(x).\]
The mapping
\[ g(\rho) := |y-(x+\rho e)|-\delta(x+\rho e)=|y-x-\rho e|-\delta(x)+\rho\]
is Lipschitz and hence absolutely continuous. 

For all $\rho\geq0$ we have the lower bound
\begin{align*}
	g'(\rho) =
		\partial_\rho[|y-(x+\rho e)|-\delta(x+\rho e)]&=-e\cdot\frac{y-x-\rho e}{|y-x-\rho e|}+1\\
		&=1-\cos\measuredangle(b_x-x,y-x-\rho e)\\
		&\geq1-\cos \measuredangle(b_x-x,y-x)\\
		&\gtrsim 2^{-2j}
\end{align*}
The last inequality is due to $y\in\omega_j^{\epsilon-}(x)$. 
Recall that $g(0)\geq-\epsilon$ and $g(r)=0$.
Since $g$ is absolutely continuous, we conclude
\[2^{-2j} r \lesssim \int_{0}^{r} g'(s) \, ds = g(r) - g(0) \leq \epsilon , \]
and
\[r \lesssim 2^{2j} \epsilon.\]

%
%
%
%
%
%

Denote $x_0 = x + r e \in P(y)$.
Consider the $2$-plane containing $x$, $y$, $b_x$ (and $x_0$). Its intersection with the convex body $E(y)$ provided by Proposition \ref{prop:convexbody} is again a convex set $E'$ in the plane. Let $\ell$ be its supporting line at $x_0$. Then $x_0\in P_{j'}^{k'}(y)$ for some $j'\in\{j,j-1\},\ k'\in\{k,k-1\}$ because
\begin{align*}
	\measuredangle(b_x-x_0,y-x_0) &\geq\measuredangle(b_x-x,y-x)\geq2^{-j}\\
	\sin\measuredangle(b_x-x_0,y-x_0) &\leq \frac{|b_x - y|}{\delta(x) - C 2^{2j} \epsilon } =  \frac{|b_x - y|\delta(x)^{-1}}{1 - C \delta(x)^{-1} 2^{2j} \epsilon } \leq \sin2^{-j+2}
\end{align*}
for $\epsilon$ small enough. By Proposition \ref{prop:convexbody} this also means that $y-x_0$ makes an angle $\sim2^{-j}$ with $\ell$, and hence so does $x-x_0$. Let $e'$ be the unit vector perpendicular to $\ell$ and $e' \cdot (y-x) < 0$. 
Then there is 
	\[ s  \lesssim |x-x_0| \sin2^{-j} \lesssim 2^{j} \epsilon \]
so that $x+se' \in \ell$. Since $x\in E'(y)$ and $\ell$ intersects $E'(y)$ only in $\partial E'(y)$, there is $s'<s$ with $x_1 = x+s'e' \in \partial E'(y)$, which means
\begin{equation}
	\dist(x,P(y))\lesssim2^j\epsilon.
	\label{eq:closetoPy}
\end{equation}

Recall that 
\[\dist(x,P_{j'}^{k'}(y))\leq|x-x_0|\lesssim2^{2j}\epsilon.\]
Let 
\[N(\epsilon') = \bigcup_{j'\in\{j-1,j\},k'\in\{k-1,k\}}\{x \in P(y): \dist( x , P_{j'}^{k'}(y)) \leq \epsilon' \}.\]
Then
\begin{multline*}
	\lim_{\epsilon \to 0} \frac{ |\{x \in \Omega_k :  y \in \omega_j^{\epsilon-}(x) \leq \epsilon \}|}{\epsilon}  \\
	\leq \lim_{\epsilon^{'} \to 0 } \lim_{\epsilon \to 0} \frac{ |\{x \in \Omega_k : \dist (x ,P(y)) \leq c_n 2^{j} \epsilon \}\cap N(\epsilon^{'})|}{\epsilon}\\
	 \lesssim \lim_{\epsilon' \to 0} 2^{j} \mathcal{H}^{n-1}(P(y) \cap N(\epsilon'))	\\
	 \leq  \sum_{|j'-j|\leq1,|k'-k|\leq1} 2^{j} \mathcal{H}^{n-1}(P_{j'}^{k'}(y)) ,
\end{multline*}


%
%
where the second inequality follows, for instance, by Theorem 3.2.39 in \cite{Federer}. The integrable majorant of the sequence above that was needed for the application of the dominated convergence theorem before can be obtained by an application of the coarea formula. This completes the proof. 
\end{proof}

These two propositions are enough to conclude a general $L^{1}$ bound for the pieces $S_j^{k}$. This bound can be refined further, when additional regularity on the domain $\Omega$ is assumed.

\begin{proposition}
\label{prop:generaldomain}
	Let $\Omega$ be an open set. Then $\| S_j^{k} \|_{L^{1} \to L^{1}} \lesssim 2^{k(n-1) + j}$. 
\end{proposition}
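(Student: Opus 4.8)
The plan is to combine the two preceding propositions, using Proposition~\ref{prop:L1bound} to dualize into integrals over the sets $P_{j'}^{k'}(y)$ and then controlling the measure $\mathcal{H}^{n-1}(P_{j'}^{k'}(y))$ through the convex geometry furnished by Proposition~\ref{prop:convexbody}. First I would fix a nonnegative bounded continuous $f$ (which suffices for an $L^1\to L^1$ bound by density) and apply Proposition~\ref{prop:L1bound} to write
\[
\int_\Omega S_j^k f(x)\,dx \lesssim 2^j \sum_{|j'-j|\le 1,\,|k'-k|\le 1} \int_{A_{j'}^{k'}} f(y)\,\mathcal{H}^{n-1}(P_{j'}^{k'}(y))\,dy.
\]
Since there are only finitely many pairs $(j',k')$ in the sum, it is enough to bound each $\mathcal{H}^{n-1}(P_{j'}^{k'}(y))$ uniformly in $y$ by $C\,2^{k(n-1)}$, which would give $\int_\Omega S_j^k f \lesssim 2^{k(n-1)+j}\|f\|_{L^1}$ as claimed.

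The geometric heart of the argument is estimating $\mathcal{H}^{n-1}(P_{j'}^{k'}(y)) = \mathcal{H}^{n-1}(P_{j'}(y)\cap\Omega_{k'})$. By Proposition~\ref{prop:convexbody}, $P(y)=\partial E(y)$ is the boundary of a convex set, and $P_{j'}(y)$ is the portion of this boundary lying in the shell $\Omega_{k'}$ where the angle parameter $|y-b_x|/\delta(x)$ sits in $(2^{-j'}, 2^{-j'+1}]$. On $\Omega_{k'}$ the distance function satisfies $\delta(x)\sim 2^{k'}\sim 2^k$, so the relevant piece of $P(y)$ is contained in a region where $|x-y|=\delta(x)\lesssim 2^k$. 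I would therefore argue that $P_{j'}^{k'}(y)$ lies on the boundary of a convex body and is contained in a ball of radius $\sim 2^k$ about $y$ (since $|x-y|=\delta(x)<2^{k'+1}$ there). The key point is that for a convex body $E$, the portion of $\partial E$ contained in a ball of radius $\rho$ has $(n-1)$-dimensional Hausdorff measure bounded by a dimensional constant times $\rho^{n-1}$: convex hypersurface area inside a ball is controlled by the area of the boundary of that ball. This yields $\mathcal{H}^{n-1}(P_{j'}^{k'}(y))\lesssim 2^{k(n-1)}$.

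The main obstacle I anticipate is making the convexity-based surface-area estimate fully rigorous and verifying that the containment in a ball of radius $\sim 2^k$ is legitimate — in particular that $P_{j'}(y)\cap\Omega_{k'}$ really does sit inside $B(y, C2^k)$ and that the monotonicity-of-convex-surface-area principle applies to the (possibly non-smooth, merely convex) boundary $\partial E(y)$. For the surface-area monotonicity I would invoke the standard fact that the nearest-point projection onto a convex body is $1$-Lipschitz, so projecting $\partial B(y, C2^k)$ onto $E(y)$ is area-nonincreasing and surjects onto the part of $\partial E(y)$ inside the ball, giving $\mathcal{H}^{n-1}(\partial E(y)\cap B(y,C2^k))\le \mathcal{H}^{n-1}(\partial B(y,C2^k))\lesssim 2^{k(n-1)}$. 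Once this is in place the proof is immediate. One should take care that the whole argument does not require any regularity of $\Omega$ beyond openness, matching the statement; all the needed structure is already encoded in the convexity of $E(y)$ from Proposition~\ref{prop:convexbody}.
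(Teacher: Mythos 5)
Your proposal is correct and takes essentially the same approach as the paper: dualize through Proposition \ref{prop:L1bound}, observe that $P_{j'}^{k'}(y)\subset \partial E(y)\cap B(y,2^{k'+1})$, and bound the surface measure by $\mathcal{H}^{n-1}(\partial B(y,2^{k'+1}))\lesssim 2^{k(n-1)}$ using the convexity of $E(y)$ from Proposition \ref{prop:convexbody}. The only difference is that where the paper simply cites the monotonicity of perimeter for convex sets contained in a ball, you supply its standard proof via the $1$-Lipschitz nearest-point projection onto $E(y)$.
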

\begin{proof}
	If $x \in P(y) \cap \Omega_k$, then $|x-y| = \dist(x, \Omega^c) \leq 2^{k+1}$. Hence $P(y) \cap \Omega_k \subset B(y,2^{k+1})$. Recall that $P(y)=\partial E(y)$ and that $E(y)$ is convex. Thus $P(y)\cap\Omega_k\subset\partial(B(y,2^{k+1})\cap E(y))$ where $B(y,2^{k+1})\cap E(y)$ is convex. Since the perimeter of $B(y,2^{k+1})$ dominates the perimeters of all convex sets with non-empty interior contained in it, we can conclude
\begin{multline*}
	\mathcal{H}^{n-1}(P_j^k(y)) \leq \mathcal{H}^{n-1}(P(y) \cap B(y,2^{k+1}) ) \leq \mathcal{H}^{n-1}(\partial(B(y,2^{k+1})\cap E(y))) \\
	\leq \mathcal{H}^{n-1}(\partial B(y,2^{k+1}))  \lesssim 2^{k(n-1)}. 
\end{multline*}
	Now the claim follows from Proposition \ref{prop:L1bound}.
\end{proof}

\begin{remark}
In case $\Omega$ is bounded and $\partial \Omega$ is $C^{2}$ smooth, the estimate for $\mathcal{H}^{n-1}(P_j^{k}(y))$ can be refined as follows. If $x \in P_j^{k}(y)$, then $|y-b_x| \le \delta(x) 2^{-j+1}$. This implies $\dist(y,\partial\Omega) \le \delta(x) \cdot 2^{-j+1}$ and further 
\[
|b_y - b_x| \le |b_y - y| + |y-b_x| \le 4\delta(x)\cdot 2^{-j}.
\]
As the inward-pointing unit normal $N_\Omega$ at the boundary is well-defined and Lipschitz,
\[|N_{\Omega}(b_y) - N_{\Omega}(b_x)| \lesssim \diam(\Omega) 2^{-j}.\]
Because $N(b_z) = (z-b_z)/|z-b_z|$, this implies 
\begin{align*}
\left| N_{\Omega}(b_y) - \frac{(x-y)}{\delta(x)}\right| \le |N_{\Omega}(b_y) - N_{\Omega}(b_x)| + \frac{|y-b_x|}{\delta(x)} \lesssim \diam(\Omega) \cdot 2^{-j}.
\end{align*}
Therefore, all vectors $x-y$ with $y \in \omega_j(x)$ are within an angle $\sim \tilde{c}(\Omega)\cdot 2^{-j}$ of $N_{\Omega}(b_y)$. Hence the set $P_j^k(y)$ is contained in a cylinder of height $\sim 2^k$ and basis $\sim \tilde{c}(\Omega) \cdot 2^{k-j}$. By the inequality for perimeters of convex sets as in the proof of Proposition \ref{prop:generaldomain}
\[
\mathcal{H}^{n-1}(P_j^{k}(y)) \lesssim c(\Omega) 2^k \cdot 2^{(k-j)(n-2)}
.\]
This dependency on $j$ is sharp even for very flat domains as can be seen by letting $\Omega$ be a smoothed out $B(0,10)\cap\{x_1\geq0\}$ and $y=2^{-j}e_1$ and $k\leq0$.

However, as the estimate on $\mathcal{H}^{n-1}(P_j^{k}(y))$ is not the narrow gap of the proof of our main theorem, we do not pursue this aspect further.
\end{remark}

The estimate $\mathcal{H}^{n-1}(P_j^{k}(y)) \lesssim 2^{k(n-1)}$ cannot be improved in general. If the boundary of the domain is a single point, the equality is achieved up to a constant. 
However, focusing on the whole $P_j(y)$ instead of single pieces $P_j^{k}(y)$, one can obtain a different estimate at cost of worsening the dependency on $j$. The following proposition is useful for small values of $j$, and it holds in very general domains.

\begin{proposition}
\label{prop:rough}
Let $\Omega$ be an open set and $y \in \Omega$. Then
\[\int_{ P_j(y) } \frac{1}{\dist(x,y)^{n-1}} \, d \mathcal{H}^{n-1} (x)\lesssim 2^{j } \]
with the constant independent of $y$. In particular, 
\[ \| \sum_k 2^{k(1-n)} S_j^{k} \|_{L^{1} \to L^{1}} \lesssim 2^{2j} \]
\end{proposition}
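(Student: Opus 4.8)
The plan is to realise the weighted integral as a solid angle measured through the radial projection from $y$, exploiting that $P(y)$ is the boundary of a convex body with $y$ in its interior.

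First I would record the relevant geometry. By Proposition \ref{prop:convexbody}, $P(y) = \partial E(y)$ with $E(y)$ convex; since $\delta$ is $1$-Lipschitz and $\delta(y)>0$ we have $B(y,\delta(y)/2)\subset E(y)$, so $y$ lies in the interior of $E(y)$ and the radial projection
\[\Pi \colon \partial E(y) \to \partial B(y,1), \qquad \Pi(x) = y + \frac{x-y}{|x-y|},\]
is a bijection. For $\mathcal{H}^{n-1}$-a.e. $x\in\partial E(y)$ there is a unique outward unit normal $\nu(x)$, and by Proposition \ref{prop:convexbody} it is parallel to $b_x-y$. Because $|x-y| = |x-b_x| = \delta(x)$ for $x\in P(y)$, the triangle $x,y,b_x$ is isosceles, so the angle at $y$ between $x-y$ and $b_x-y$ equals $\pi/2-\tfrac12\measuredangle(y-x,b_x-x)$. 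Writing $\theta(x)$ for the angle between $x-y$ and $\nu(x)$, this gives
\[|\cos\theta(x)| = \sin\tfrac12\measuredangle(y-x,b_x-x) = \frac{|y-b_x|}{2\delta(x)},\]
and on $P_j(y)$, where $2^{-j}<|y-b_x|/\delta(x)\le 2^{-j+1}$, we obtain $|\cos\theta(x)| \sim 2^{-j}$.

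Next I would change variables through $\Pi$. The Jacobian of $\Pi$ restricted to the tangent plane $T_x\partial E(y)=\nu(x)^{\perp}$ is $|\cos\theta(x)|\,|x-y|^{-(n-1)}$, so the area formula (cf.\ \cite{Federer}) yields, $\mathcal{H}^{n-1}$-a.e.,
\[\frac{1}{|x-y|^{n-1}}\, d\mathcal{H}^{n-1}(x) = \frac{1}{|\cos\theta(x)|}\, d\mathcal{H}^{n-1}\big(\Pi(x)\big).\]
Combining the lower bound $|\cos\theta|\gtrsim 2^{-j}$ on $P_j(y)$ with the injectivity of $\Pi$ and $\mathcal{H}^{n-1}(\partial B(y,1)) = |\mathbb{S}^{n-1}|$ gives
\[\int_{P_j(y)} \frac{d\mathcal{H}^{n-1}(x)}{\dist(x,y)^{n-1}} = \int_{\Pi(P_j(y))} \frac{d\mathcal{H}^{n-1}}{|\cos\theta|} \lesssim 2^{j}\,\mathcal{H}^{n-1}(\partial B(y,1)) \lesssim 2^{j},\]
which is the first assertion. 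For the operator bound it suffices by positivity to take $f\geq0$ continuous and bounded. Summing the conclusion of Proposition \ref{prop:L1bound} against $2^{k(1-n)}$ and using $2^{k(1-n)}\sim 2^{k'(1-n)}$ for $|k-k'|\le1$ yields
\[\int_\Omega \sum_k 2^{k(1-n)} S_j^k f \,dx \lesssim 2^j \sum_{|j'-j|\le1} \int_\Omega f(y)\Big(\sum_{k'} 2^{k'(1-n)}\mathcal{H}^{n-1}(P_{j'}^{k'}(y))\Big)dy.\]
Since $|x-y| = \delta(x)\sim 2^{k'}$ on $P_{j'}^{k'}(y)$, each inner term is comparable to $\int_{P_{j'}^{k'}(y)}\dist(x,y)^{-(n-1)}\,d\mathcal{H}^{n-1}$; as the pieces $P_{j'}^{k'}(y)=P_{j'}(y)\cap\Omega_{k'}$ are disjoint in $k'$ with union $P_{j'}(y)$, the inner sum is at most $\int_{P_{j'}(y)}\dist(x,y)^{-(n-1)}\,d\mathcal{H}^{n-1}\lesssim 2^{j'}\lesssim 2^{j}$. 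This produces the bound $2^{2j}\|f\|_{L^1}$ and hence the stated operator norm.

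I expect the main obstacle to be the rigorous justification of the change of variables on the merely Lipschitz boundary $\partial E(y)$: one must verify that the outward normal and the Jacobian identity hold $\mathcal{H}^{n-1}$-almost everywhere, for which convexity is essential, since the non-smooth points (where $b_x$ or the supporting hyperplane may fail to be unique) form an $\mathcal{H}^{n-1}$-null set. The remaining difficulty is the bookkeeping of the overlapping $(j',k')$ ranges and the reassembly of $P_{j'}(y)$ from its slices in the final step, both of which are routine once the geometric estimate is in place.
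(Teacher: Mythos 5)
Your proof is correct, and the second half (deducing the operator bound from the pointwise integral estimate via Proposition \ref{prop:L1bound}, using $\dist(x,y)=\delta(x)\sim 2^{k'}$ on $P_{j'}^{k'}(y)$ and the disjointness in $k'$) is identical to the paper's. For the first claim, both arguments rest on the same geometric input from Proposition \ref{prop:convexbody} --- the ray from $y$ through $x\in P_j(y)$ makes an angle $\sim 2^{-j}$ with the supporting hyperplane, equivalently $|\cos\theta(x)|\sim 2^{-j}$ in your notation --- and both kill the weight $\dist(x,y)^{1-n}$ by polar coordinates centered at $y$. The implementations differ, though: the paper never differentiates the convex surface; it thickens $P_j(y)$ into the one-sided $\epsilon$-neighborhood inside $E(y)^{c}$, bounds each radial slice of that solid set by $\lesssim 2^{j}\epsilon$, and passes to the limit in a Minkowski-content fashion (via \cite{Federer}), whereas you work directly on the surface, pushing $P_j(y)$ forward to the unit sphere by the radial projection $\Pi$ and invoking the area formula with tangential Jacobian $|\cos\theta(x)|\,|x-y|^{1-n}$. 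Your route is more direct and makes the solid-angle interpretation explicit, but it carries exactly the burden you identify: justifying the a.e.\ existence of the normal and the area formula on the merely Lipschitz boundary $\partial E(y)$ (standard for convex hypersurfaces, but this is precisely the technicality the paper's thickening sidesteps, at the price of comparing Hausdorff measure with one-sided Minkowski content). One small correction: $\Pi$ need not be a bijection onto $\partial B(y,1)$, since $E(y)$ can be unbounded --- for a half-space $\Omega$ the set $E(y)$ is a solid paraboloid --- so some rays from $y$ never meet $\partial E(y)$; but your estimate only uses injectivity of $\Pi$, which does hold because $y$ lies in the interior of the convex set $E(y)$ (as you note, $B(y,\delta(y)/2)\subset E(y)$), so the argument is unaffected.
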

\begin{proof}
We have
\[\int_{ P_j(y) } \frac{1}{\dist(x,y)^{n-1}} \, d\mathcal{H}^{n-1}(x) 
\lesssim \liminf_{\epsilon \to 0} \frac{1}{\epsilon}  \int_{ \{x \in E(y)^{c} : \dist(x, P_j(y)) \leq \epsilon \} } \frac{1}{\dist(x,y)^{n-1}} \, d x. \]
Given any point $x \in P_j(y)$ and a line $l_x = \{ y + t(x-y): t\in \mathbb{R}\}$, we see that by Proposition \ref{prop:convexbody} the line makes an angle $\sim 2^{-j}$ with $P_j(y)$, and hence
\[\mathcal{H}^{1}(l_x \cap \{z \in E(y)^{c}: \dist(z, P_j(y)) \leq \epsilon \}  ) \lesssim 2^{j}\epsilon .\]
The first claimed bound for the integral follows immediately from passing to polar coordinates with origin at $y$.

To prove the second claim, note that by Proposition \ref{prop:L1bound}
\begin{align*}
 \int_{\Omega} \sum_{k} 2^{k(1-n)} S_j^{k}f(x) \, dx
		&\lesssim 2^{j} \sum_{|j'-j|\leq1}\int_{\Omega} f(y) \left( \sum_{k} 2^{k(1-n)} \mathcal{H}^{n-1}(P_{j'}^{k}(y)) \right) \, dy \\	
		&\lesssim 2^{j} \sum_{|j'-j|\leq1}\int_{\Omega} f(y) \left( \int_{ P_{j'}(y) } \frac{1}{\dist(x,y)^{n-1}} \, d \mathcal{H}^{n-1} (x) \right) \, dy \\	
		& \lesssim 2^{2j}\|f\|_{L^1},
\end{align*}
where the last step was an application of the first claim.
\end{proof}

\section{$L^{p}$ bounds and geometry}
To conclude bounds for the operator $B_\alpha$, we have to sum up all the pieces in the decomposition. In order to make this work, one has to ensure that there is enough decay in $j$ and $k$. Although the $L^{1}$ bounds do not sum up, interpolation with the better $L^{\infty}$ bounds provides us with enough decay in the angle parameter $j$. If $\Omega$ is bounded, we can take advantage of the $L^{p}(\Omega)$ spaces being nested and use the decay in the scale parameter $k$ near the boundary to complete the proof with no smoothness assumptions on the boundary of the domain. This is possible only when we do not attempt to prove scalable estimates that would capture $L^{p} \to L^{q}$ smoothing beyond one derivative gain.

\begin{theorem}
\label{thm:largealpha}
Let $\Omega$ be a bounded open set, $p , \alpha > 1$. Then
\[\| B_\alpha \|_{L^{p}(\Omega) \to L^{p}(\Omega)} \lesssim \diam(\Omega)^{\alpha-1} \] 
where the implicit constant only depends on $p$, $\alpha$ and the dimension.
\end{theorem}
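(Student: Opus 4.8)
The plan is to sum the pieces of the angular decomposition \eqref{eq:red_angle} by first fixing the angular index $j$, interpolating an $L^1$ with an $L^\infty$ bound for the resulting operator, and only then summing over $j$. Writing $2^{k(\alpha-n)} = 2^{k(\alpha-1)}2^{k(1-n)}$ and setting $U_j = \sum_k 2^{k(\alpha-n)}S_j^k$, the estimate \eqref{eq:red_angle} reads $B_\alpha f \lesssim \sum_{j\geq1} 2^{-j}U_j f$. Since $|B_\alpha f| \le B_\alpha|f|$ and every $U_j$ is a positive operator, it suffices to treat $f \geq 0$. The crucial structural feature is that, for fixed $j$, the functions $S_j^k f$ are supported on the sets $\Omega_k$, which are pairwise disjoint and partition $\Omega$; hence the sum over $k$ can be handled termwise in every $L^p$ norm. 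Moreover, as $\Omega$ is bounded, $\Omega_k = \emptyset$ unless $2^k \lesssim \diam(\Omega)$, so the $k$-sum only runs over scales below a top scale $2^K \sim \diam(\Omega)$.

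First I would record the two endpoint bounds for $U_j$. For the $L^\infty$ bound, Proposition \ref{prop:infinity} gives $\|S_j^k\|_{L^\infty\to L^\infty}\lesssim 2^{(n-1)(k-j)}$, so by disjointness of the supports
\[ \|U_j f\|_{L^\infty} = \sup_k 2^{k(\alpha-n)}\|S_j^k f\|_{L^\infty} \lesssim \Big(\sup_{k:\,2^k\lesssim\diam(\Omega)} 2^{k(\alpha-1)}\Big)\,2^{-(n-1)j}\|f\|_{L^\infty} \lesssim \diam(\Omega)^{\alpha-1}\,2^{-(n-1)j}\|f\|_{L^\infty}, \]
where $\sum_k 2^{k(\alpha-1)}$ converges to $\sim \diam(\Omega)^{\alpha-1}$ precisely because $\alpha>1$. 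For the $L^1$ bound I would use the general-domain estimate of Proposition \ref{prop:generaldomain}, $\|S_j^k\|_{L^1\to L^1}\lesssim 2^{k(n-1)+j}$, and the same disjointness and geometric summation to obtain
\[ \|U_j f\|_{L^1} = \sum_k 2^{k(\alpha-n)}\|S_j^k f\|_{L^1} \lesssim 2^{j}\Big(\sum_{k:\,2^k\lesssim\diam(\Omega)} 2^{k(\alpha-1)}\Big)\|f\|_{L^1} \lesssim \diam(\Omega)^{\alpha-1}\,2^{j}\|f\|_{L^1}. \]

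It then remains to interpolate and sum. By Riesz--Thorin interpolation between the two displays above, for $1<p<\infty$ with $\tfrac1p = 1-\theta$,
\[ \|U_j\|_{L^p\to L^p} \lesssim \diam(\Omega)^{\alpha-1}\,2^{j(\frac1p-\frac{n-1}{p'})}. \]
Inserting this into $B_\alpha f \lesssim \sum_{j\geq1}2^{-j}U_j f$ and summing the geometric series gives
\[ \|B_\alpha\|_{L^p(\Omega)\to L^p(\Omega)} \lesssim \diam(\Omega)^{\alpha-1}\sum_{j\geq1} 2^{j(-1+\frac1p-\frac{n-1}{p'})} = \diam(\Omega)^{\alpha-1}\sum_{j\geq1} 2^{-jn/p'}, \]
since a short computation shows the exponent equals $-n/p'$. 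This last series converges for every $p>1$, which is exactly the asserted conclusion, and the implied constant depends only on $p$, $\alpha$ and $n$ through the two geometric series.

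The step I expect to carry the weight of the argument is the $L^1$ bound: one must keep its growth in $j$ no worse than $2^{j}$ while making the scale dependence summable to $\diam(\Omega)^{\alpha-1}$. Both features rely on absorbing the factor $2^{k(\alpha-1)}$ into a convergent geometric series, which works only because $\alpha>1$ and $\Omega$ is bounded; this is precisely the mechanism that fails for $\alpha=1$ and forces the more delicate geometric estimates (Propositions \ref{prop:convexbody} and \ref{prop:rough}) in the remaining cases. Once both endpoints carry the common factor $\diam(\Omega)^{\alpha-1}$, the interpolation exponent $-n/p'$ is automatically negative for all $p>1$, so no lower bound on $p$ is needed here.
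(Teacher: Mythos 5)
Your proof is correct and follows essentially the same route as the paper: decompose $B_\alpha = \sum_j 2^{-j}\sum_k 2^{k(\alpha-n)}S_j^k$, bound the fixed-$j$ operator on $L^1$ via Proposition \ref{prop:generaldomain} and on $L^\infty$ via Proposition \ref{prop:infinity} (absorbing $\sum_k 2^{k(\alpha-1)} \lesssim \diam(\Omega)^{\alpha-1}$ using $\alpha>1$ and boundedness of $\Omega$), interpolate, and sum the resulting geometric series in $j$ with exponent $-n/p'$. This matches the paper's argument step for step, including the exponent computation.
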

\begin{proof}
Let $S_j = \sum_{k} 2^{k(\alpha - n)} S_j^{k}$  
so that $B_\alpha = \sum_{j} 2^{-j} S_j$. Then by Proposition \ref{prop:generaldomain}
\begin{multline*}
 \| S_j \|_{L^{1}(\Omega) \to L^{1}(\Omega)}  
 \leq \sum_{k=-\infty}^{ \log \diam(\Omega) + 1 }2^{k(\alpha - n)}  \| S_j^{k} \|_{L^{1}(\Omega) \to L^{1}(\Omega)}\\
  \lesssim \sum_{k=-\infty}^{ \log \diam(\Omega)  + 1 }2^{k(\alpha - 1)} 2^{j}
  \lesssim 2^{j}  \diam(\Omega)^{\alpha - 1}.
\end{multline*}
By Proposition \ref{prop:infinity}
\[\| 2^{-j} S_j  \|_{L^{\infty}(\Omega) \to L^{\infty}(\Omega)} \lesssim 2^{-n j } \diam(\Omega)^{\alpha -1} \]
and by interpolation we obtain
	\[  \| 2^{-j} S_j \|_{L^{p}(\Omega) \to L^{p}(\Omega)} \lesssim 2^{- \frac{(p-1) n}p j} \diam(\Omega)^{\alpha-1}. \]
As the exponent is negative, we can sum up in $j$ to conclude the proof.
\end{proof}

To deal with the critical case $\alpha = 1$ where our estimates have the correct scaling, we have to take into account finer properties of the boundary, as the estimation as rough as above leads to a logarithmic blow-up of the $k$-sum at the boundary.

\begin{proposition}
\label{prop:annulus}
Let $\Omega$ be an open set. 
\begin{itemize}
	\item If $\Omega$ satisfies the uniform curvature bound with $R$, then for all $y \in \Omega$ and 
	$x \in P(y)$ with $ \delta(x) \leq R$,   
it holds that
\begin{equation}
\label{eq:annulus_smooth}
	\delta(x) (1 - \frac{\delta(x) }{R}) (1 - \cos \beta ) \leq  \dist (y , \partial \Omega) 
\end{equation}
where $\beta = \measuredangle (b_x-x,y-x)$.  
\item If $\Omega^c$ is convex, then
\begin{equation}
\label{eq:annulus_convex}
\delta(x)(1 - \cos \beta ) \leq \dist (y , \partial \Omega) .
\end{equation}
\end{itemize}
\end{proposition}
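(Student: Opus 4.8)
The plan is to bound $\dist(y,\partial\Omega)$ from below by the distance from $y$ to a convex barrier separating $y$ from $\Omega^{c}$: in the convex case the barrier is the supporting hyperplane of $\Omega^{c}$ at $b_{x}$, and under the curvature hypothesis it is the sphere bounding the interior ball supplied by the assumption. First I would record the reduction $\dist(y,\partial\Omega)=\dist(y,\Omega^{c})$, which holds because $y\in\Omega$ is open and the nearest point of the closed set $\Omega^{c}$ to $y$ necessarily lies on $\partial\Omega$. I then fix coordinates placing $x$ at the origin and set $e=(b_{x}-x)/\delta(x)$, so that $b_{x}=\delta(x)\,e$. Since $x\in P(y)$ means $|y-x|=\delta(x)$, the point $y$ lies on $\partial B(x,\delta(x))$, and by definition of $\beta$ we have $y\cdot e=\delta(x)\cos\beta$.

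For the convex case, the variational characterization of the nearest-point projection onto the convex set $\Omega^{c}$ gives $(x-b_{x})\cdot(z-b_{x})\le0$ for every $z\in\Omega^{c}$, which in these coordinates reads $\Omega^{c}\subset\{z:z\cdot e\ge\delta(x)\}$. As $y\cdot e=\delta(x)\cos\beta\le\delta(x)$, the point $y$ lies outside this half-space at distance $\delta(x)-y\cdot e=\delta(x)(1-\cos\beta)$, and since $\Omega^{c}$ is contained in the half-space, \eqref{eq:annulus_convex} follows at once.

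For the curvature bound, the hypothesis furnishes $B(c,R)\subset\Omega$ with center $c=b_{x}+R(x-b_{x})/\delta(x)=(\delta(x)-R)e$. Thus $\Omega^{c}$ avoids $B(c,R)$, every $z\in\Omega^{c}$ obeys $|z-c|\ge R$, and the triangle inequality yields $\dist(y,\Omega^{c})\ge R-|y-c|$. Expanding $|y-c|^{2}$ using $|y|=\delta(x)$ and $y\cdot e=\delta(x)\cos\beta$ produces the key identity
\[
R^{2}-|y-c|^{2}=2\,\delta(x)(R-\delta(x))(1-\cos\beta),
\]
which is nonnegative precisely because $\delta(x)\le R$, confirming $|y-c|\le R$. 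Writing $R-|y-c|=(R^{2}-|y-c|^{2})/(R+|y-c|)$ and using $R+|y-c|\le 2R$ then gives \eqref{eq:annulus_smooth}. There is no deep obstacle here: the content is entirely the geometric reduction to a separating barrier, and the only step demanding care is the algebraic identity above and tracking the sign of $R-\delta(x)$. It is worth noting that \eqref{eq:annulus_convex} is the formal limit $R\to\infty$ of \eqref{eq:annulus_smooth}, the supporting hyperplane being the limiting barrier of the interior balls.
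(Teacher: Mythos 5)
Your proposal is correct and follows essentially the same route as the paper: both use the interior tangent ball $B(c,R)$ with $c = b_x + R(x-b_x)/\delta(x)$ as a barrier, compute $|y-c|$ by the law of cosines, and extract the lower bound $R - |y-c| \geq \delta(x)(1-\delta(x)/R)(1-\cos\beta)$ (the paper via $1-2u \leq (1-u)^2$, you via the equivalent difference-of-squares factorization with $R + |y-c| \leq 2R$). The only cosmetic difference is the convex case, which you prove directly from the supporting-hyperplane (variational) inequality for the projection onto $\Omega^c$, whereas the paper obtains it as the $R \to \infty$ limit of the curvature case — the same geometry, as you yourself observe.
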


\begin{proof}
Take $x \in \Omega$ and $y \in \partial B(x,\delta(x))$ and let $\beta$ be the angle between $b_x - x$ and $y - x$. Because $\Omega$ satisfies a uniform curvature bound, there is an $R > 0$ independent of $x$ and $y$ so that we can find a ball $B(z , R) \subset \Omega$ with  $z=b_x+(x-b_x) R / \delta(x)$ so that $ \overline{B(z,R)} \cap \partial \Omega = \{b_x\}$.
The Pythagorean identity reads
	\begin{align*}
		|z-y|^2&=(\delta(x)\sin\beta)^2+(R-\delta(x)(1-\cos\beta))^2\\
		&=R^2(1-2\frac{\delta(x)}R(1-\frac{\delta(x)}R)(1-\cos\beta))\\
		&\leq R^2(1-\frac{\delta(x)}R(1-\frac{\delta(x)}R)(1-\cos\beta))^2.
	\end{align*}
	Let $w$ be the closest point to $y$ in $\partial B(z,R)$. Since $z$, $y$ and $w$ are on the same line, we get
	\begin{align*}
		\dist(y,\partial\Omega)&\geq|y-w|=|z-w|-|z-y|\\
		&\geq R-R(1-\frac{\delta(x)}R(1-\frac{\delta(x)}R)(1-\cos\beta))\\
		&=\delta(x)(1-\frac{\delta(x)}R)(1-\cos\beta)
	\end{align*}
	as claimed.
If $\Omega^c$ is convex, then the uniform curvature bound is satisfied with $R = \infty$, whence the second claim follows.

\begin{figure}
\centering
\includegraphics[width=0.8\textwidth]{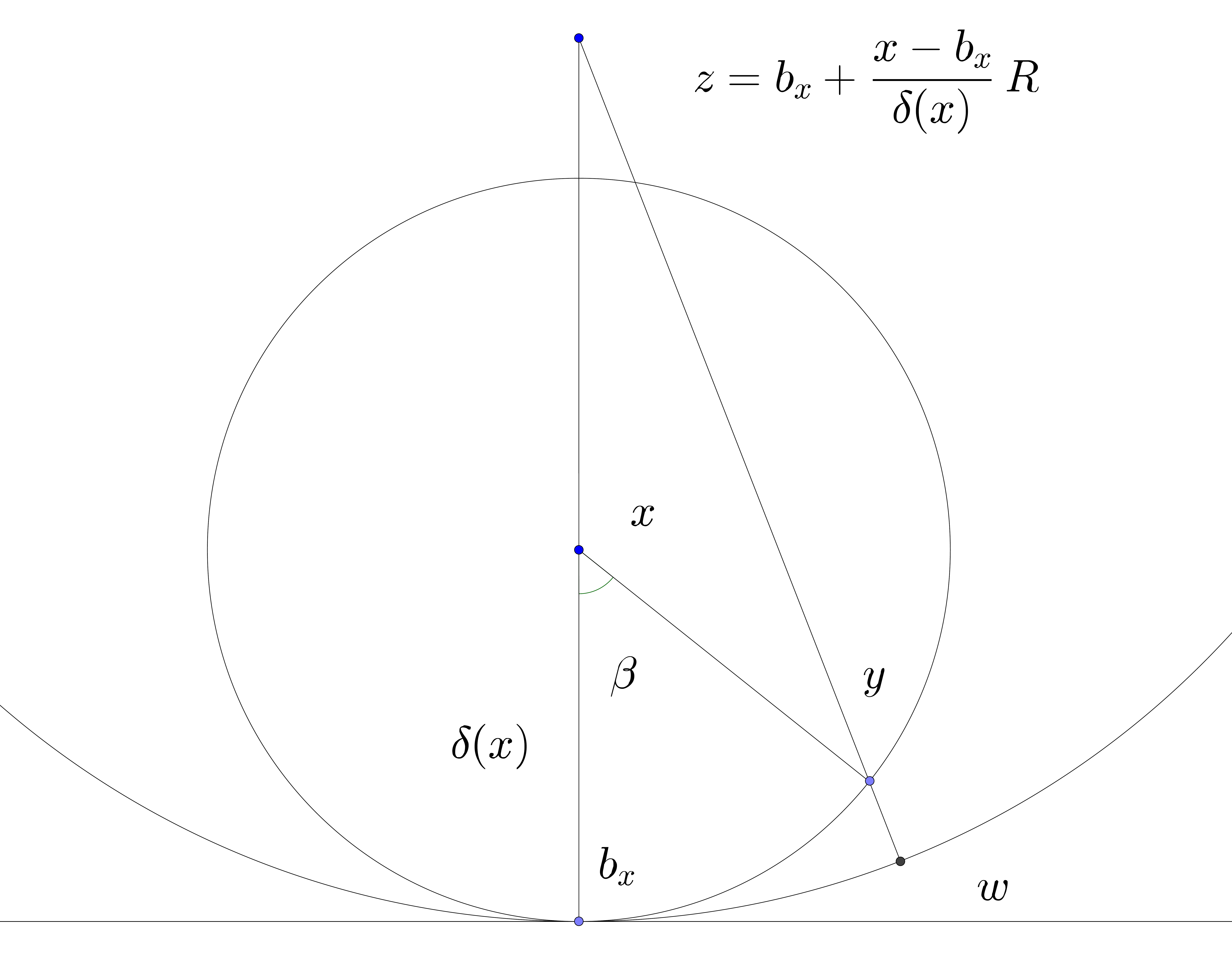}
	\caption{The balls and points appearing in the proof of Proposition \ref{prop:annulus}.}
\end{figure}

\end{proof}

\begin{theorem}
\label{thm:alphaone}
Let $\Omega$ be an open set. Let $\alpha = 1$ and $p > 1$. Then
\begin{itemize}
\item If $ \Omega$ is bounded and satisfies the uniform curvature bound, then 
\[ \| B_1 \|_{L^{p}(\Omega) \to L^{p}(\Omega)} \lesssim \log \left( \frac{\diam(\Omega)}{R} + 1 \right) \]
where $R$ is the radius from the uniform curvature bound.
\item If $\Omega^c$ is convex, then
\[\| B_1 \|_{L^{p}(\Omega) \to L^{p}(\Omega)} \lesssim 1 \]
and the operator norm only depends on the dimension and $p$.
\item If $\Omega$ is merely open, then
\[\| B_1 \|_{L^{p}(\Omega) \to L^{p}(\Omega)} \lesssim 1 \]
under the restriction $p > 1 + \frac{1}{n}$.
\end{itemize}
\end{theorem}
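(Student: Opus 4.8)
The plan is to handle all three assertions through one interpolation-and-summation scheme, in which they differ only in the quality of the $L^1$ bound that is fed into it. Writing $S_j = \sum_k 2^{k(1-n)} S_j^k$, the decomposition \eqref{eq:red_angle} at $\alpha=1$ gives $B_1 f \lesssim \sum_{j\geq 1} 2^{-j} S_j f$, so it suffices to bound each $\|S_j\|_{L^p\to L^p}$ by a term summable in $j$. Proposition \ref{prop:infinity} already provides $\|S_j\|_{L^\infty\to L^\infty}\lesssim 2^{-(n-1)j}$; interpolating this via Riesz--Thorin against an $L^1$ bound of the form $\|S_j\|_{L^1\to L^1}\lesssim 2^{aj}C_j$ yields $\|2^{-j}S_j\|_{L^p\to L^p}\lesssim C_j^{1/p}\,2^{\lambda j}$ with $p\lambda = a - p - (n-1)(p-1)$. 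Everything then reduces to arranging $\lambda<0$ while keeping $C_j$ subexponential. Since smooth functions are dense and all the operators are linear, it is enough to argue for smooth $f\geq 0$, so that Proposition \ref{prop:L1bound} applies.

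For the third assertion, where $\Omega$ is merely open, I would simply insert the rough bound of Proposition \ref{prop:rough}, namely $\|S_j\|_{L^1\to L^1}\lesssim 2^{2j}$, so that $a=2$ and $C_j=1$. Then $p\lambda = (n+1)-np$, which is negative exactly when $p>1+\frac1n$; in that range the series in $j$ converges and the resulting constant depends only on $p$ and $n$.

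To reach all $p>1$ in the first two cases the value $a=2$ must be lowered to $a=1$, at the cost of a mild factor $C_j$, and this is precisely where Proposition \ref{prop:annulus} enters. Combining Proposition \ref{prop:L1bound} with the estimate $\mathcal{H}^{n-1}(P_j^{k'}(y))\lesssim 2^{k'(n-1)}$ from the proof of Proposition \ref{prop:generaldomain}, one gets $\|S_j\|_{L^1\to L^1}\lesssim 2^{j}\sup_y I_j(y)$ where $I_j(y)=\sum_{k'}2^{k'(1-n)}\mathcal{H}^{n-1}(P_j^{k'}(y))$ and each summand is $\lesssim 1$; thus $I_j(y)$ is controlled by the number of scales $k'$ for which $P_j^{k'}(y)\neq\emptyset$. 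Here the geometry does the work: on $P_j^{k'}(y)$ the angle $\beta=\measuredangle(b_x-x,y-x)$ satisfies $\beta\sim 2^{-j}$, so $1-\cos\beta\sim 2^{-2j}$, and the estimate \eqref{eq:annulus_smooth} forces $\dist(y,\partial\Omega)\gtrsim 2^{k'-2j}$ whenever $\delta(x)\leq R$, while the elementary inequality $\dist(y,\partial\Omega)\leq|y-b_x|\lesssim 2^{k'-j}$ always holds. Together these pin the admissible $2^{k'}$ to the dyadic window $[\dist(y,\partial\Omega)2^{j},\dist(y,\partial\Omega)2^{2j}]$, so only $\sim j$ scales below $R$ contribute, together with the at most $\lesssim \log(\diam(\Omega)/R+1)$ scales above $R$ on which \eqref{eq:annulus_smooth} is vacuous. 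Hence $a=1$ and $C_j = j + \log(\diam(\Omega)/R+1)$, giving $p\lambda = n(1-p)<0$ for every $p>1$; the exponential decay absorbs the polynomial factor $j^{1/p}$, and summing leaves $\|B_1\|_{L^p\to L^p}\lesssim \log(\diam(\Omega)/R+1)$. For the convex-complement case one uses \eqref{eq:annulus_convex} in place of \eqref{eq:annulus_smooth}, valid at all scales ($R=\infty$), so there is no large-scale contribution, $C_j=j$, and the same summation yields the bound $\lesssim 1$ with constant depending only on $n$ and $p$.

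I expect the crux to be the scale-counting of the third paragraph: translating Proposition \ref{prop:annulus} into the assertion that, for fixed $y$ and $j$, the pieces $P_j^{k'}(y)$ are nonempty only for $k'$ in a window of logarithmic width $\sim j$. The lower bound $2^{k'}\gtrsim \dist(y,\partial\Omega)2^{j}$ is elementary, whereas the matching upper bound $2^{k'}\lesssim \dist(y,\partial\Omega)2^{2j}$ rests on the quantitative curvature estimate and on verifying $\beta\sim 2^{-j}$ uniformly on $P_j^{k'}(y)$; some care is also needed at the threshold $\delta(x)\approx R$, which separates the $\sim j$ small scales from the $\sim\log(\diam(\Omega)/R+1)$ large scales. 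Once this counting is secured, the interpolation and the geometric summation in $j$ are routine.
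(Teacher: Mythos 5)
Your proposal is correct and follows essentially the same route as the paper: the same $L^1$--$L^\infty$ interpolation of the $S_j = \sum_k 2^{k(1-n)}S_j^k$ pieces, with Proposition \ref{prop:rough} supplying the $2^{2j}$ bound for the third case, and Proposition \ref{prop:annulus} combined with the elementary bound $\dist(y,\partial\Omega)\lesssim 2^{k-j}$ pinning the nonempty scales $P_j^k(y)$ to a window of $\sim j$ values of $k$ (plus the $\lesssim\log(\diam(\Omega)/R+1)$ scales above $R$) for the first two cases. Your exponent bookkeeping ($p\lambda = a-p-(n-1)(p-1)$, giving $n+1-np$ for $a=2$ and $n(1-p)$ for $a=1$) matches the paper's conclusions exactly.
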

 
\begin{proof}
Proposition \ref{prop:generaldomain} implies
\begin{equation}
\label{eq:thelastequation}
 \int_{\Omega} 2^{k(1-n) - j} S_j^{k} f (x)  \, dx			
			\lesssim   \int f(y) 1_{A_j^k}(y)\, dy.
\end{equation}
Recall the definition \eqref{eq:ajk}. There are only $\sim \log (\diam (\Omega) / R  + 1 )$ values of $k$ so that $R/8 \leq 2^{k} \leq 2 \diam(\Omega)$. For $k$ such that $2^{k+3} \leq R$, 
we can use the first item in Proposition \ref{prop:annulus} to see that for fixed $y$, the set $P_j^k(y)$ is non-empty only for $k$ such that $2^{-2j + k} \lesssim \dist(y, \partial \Omega)$. On the other hand, the upper bound
\[\dist(y, \partial \Omega) \leq |y-b_x| \lesssim 2^{k-j} \]
is always valid, so $P_j^k(y)$ is non-empty only for for $2^{-2j + k} \lesssim \dist(y, \partial \Omega) \lesssim 2^{-j + k}$. Consequently,
\[A_j^{k} \subset \{ y \in \Omega: 2^{-2j + k} \lesssim \dist(y, \partial \Omega) \lesssim 2^{-j + k} \}.  \]
For any $y$, there are only $\lesssim j$ values $k$ such that the set above is non-empty, and hence by \eqref{eq:thelastequation}
%
%
%
%
%
%
%
%
\[ \|\sum_{k}  2^{k(1-n) - j} S_j^{k} \|_{L^{1} \to L^{1}} \lesssim \log \left( \frac{ \diam (\Omega) }{R} +1 \right) + j . \]  
Interpolation as in the proof of Theorem \ref{thm:largealpha} implies the claim.

To prove the second item, just note that the convexity assumption on the complement means sending $R \to \infty$ so that $2^{k+3}\leq R$ always holds. To prove the third item, we study $S_j$ as in the proof of Theorem \ref{thm:largealpha} and replace the $L^{1}$ bound from Proposition \ref{prop:generaldomain} by that from Proposition \ref{prop:rough}. 
\end{proof}

\begin{corollary}
\label{cor:constrained}
Let $\Omega$ be a domain, $p> 1$ and $f \in L^{p}$. Then $A_\alpha f (x)$ from \eqref{def:solav} is weakly differentiable and 
\[\| \nabla A_{\alpha} f  \|_{L^{p}(\Omega)} \lesssim  \| f \|_{L^{p}(\Omega)} \]
if any one of the following holds:
\begin{itemize}
\item $\alpha > 1$ and $\Omega$ is bounded.
\item $\alpha = 1$ and $\Omega$ is bounded and satisfies a uniform curvature bound.
\item $\alpha = 1$ and $\Omega^c$ is convex.
\item $\alpha = 1$ and $p > 1 + \frac{1}{n}$
\end{itemize}
The constant depends on the domain, $\alpha$ and the dimension. 
\end{corollary}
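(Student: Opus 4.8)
The plan is to assemble the pieces already established into the gradient bound for $A_\alpha$. By Proposition \ref{prop:derivative_averaging}, for $f \in C^\infty(\Omega)$ we have the pointwise estimate
\[
|\nabla A_\alpha f(x)| \lesssim |A_{\alpha-1} f(x)| + B_\alpha f(x),
\]
so the $L^p$ bound for $\nabla A_\alpha$ reduces to controlling the two terms on the right separately. For the first term, I would note that $|A_{\alpha-1} f(x)| \leq \m_{\alpha-1} |f|(x) \leq M_{\alpha-1}|f|(x)$, and invoke the $L^p \to L^q$ boundedness of the fractional maximal operator $M_{\alpha-1}$ (with $q = np/(n - (\alpha-1)p)$). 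Since $\Omega$ is bounded in cases (1)--(3), Hölder's inequality upgrades this $L^q$ bound into an $L^p(\Omega)$ bound with a constant depending on $\diam(\Omega)$; in case (4) one uses $p > 1 + 1/n$ to ensure the relevant range is admissible. The essential work has already been done: the term $A_{\alpha-1}f$ is harmless because the fractional maximal operator smooths.

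For the second, genuinely new term $B_\alpha f$, I would simply apply the appropriate $L^p(\Omega) \to L^p(\Omega)$ bound already proved. In case (1), $\alpha > 1$ with $\Omega$ bounded, Theorem \ref{thm:largealpha} gives $\|B_\alpha\|_{L^p \to L^p} \lesssim \diam(\Omega)^{\alpha-1}$. In cases (2), (3) and (4), where $\alpha = 1$, the three items of Theorem \ref{thm:alphaone} supply exactly the matching hypotheses: the uniform curvature bound yields a logarithmic constant, convexity of $\Omega^c$ yields a dimension-dependent constant, and a merely open $\Omega$ with $p > 1 + 1/n$ yields a bounded constant. Combining the two contributions gives $\|\nabla A_\alpha f\|_{L^p(\Omega)} \lesssim \|f\|_{L^p(\Omega)}$ for $f \in C^\infty(\Omega)$.

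Finally, I would pass from smooth $f$ to general $f \in L^p(\Omega)$ by density and weak differentiability. Since $A_\alpha$ is a linear operator and the bound just derived is uniform, approximating $f$ by smooth functions $f_j \to f$ in $L^p$ shows that $A_\alpha f_j$ is bounded in $W^{1,p}(\Omega)$; extracting a weakly convergent subsequence and identifying its limit with the distributional gradient of $A_\alpha f$ (using uniqueness of distributional limits, exactly as at the end of the proof of Lemma \ref{lemma:full}) establishes that $A_\alpha f$ is weakly differentiable with the claimed bound. I expect no serious obstacle here, since all the analytic difficulty has been isolated into Theorems \ref{thm:largealpha} and \ref{thm:alphaone}; the only mild care needed is matching each of the four structural hypotheses to the correct item of those theorems and confirming that the $A_{\alpha-1}$ term is controlled under each, which is where boundedness of $\Omega$ (or the restriction on $p$) is used.
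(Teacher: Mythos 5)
Your proposal is correct and follows essentially the same route as the paper: Proposition \ref{prop:derivative_averaging} reduces the claim to bounding $A_{\alpha-1}$ (via the fractional/Hardy--Littlewood maximal operator) and $B_\alpha$ (via Theorem \ref{thm:largealpha} for case (1) and Theorem \ref{thm:alphaone} for cases (2)--(4)), followed by a density argument for general $f\in L^p(\Omega)$. Two harmless slips worth noting: in cases (2)--(4) one has $\alpha=1$, so the first term is controlled by the classical maximal function $M_0 f$ and needs neither boundedness of $\Omega$ (which case (3) does not assume) nor the restriction $p>1+\frac1n$ (that restriction is used only for $B_1$); and in the unbounded cases one should extract the weak limit from the sequence $\nabla A_\alpha f_j$ alone and identify it through local uniform convergence of $A_\alpha f_j$, since $A_\alpha$ itself need not be $L^p$-bounded on unbounded domains, so $A_\alpha f_j$ need not be bounded in $W^{1,p}(\Omega)$.
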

\begin{proof}
By linearity, it suffices to prove the norm inequality for smooth functions.
By Proposition \ref{prop:derivative_averaging}, it suffices to bound $B_\alpha$ from \eqref{def:surfavw}.
This follows from Theorem \ref{thm:largealpha} and Theorem \ref{thm:alphaone}
\end{proof}

Theorem \ref{thm:intro} follows from Corollary \ref{cor:constrained} and Lemma \ref{lemma:full}.

\section{Remarks}

\subsection{Role of the domain}
It is not clear if the conditions on the domain in the hypothesis of Theorem \ref{thm:intro} are necessary. One may ask if
	\[\|\nabla M^\Omega_1\|_{L^p(\Omega)\to L^p(\Omega)}\lesssim 1\]
holds for all domains $\Omega$ and all $p > 1 $. We are not aware of any counterexamples so far.
Since $M_0^\Omega$ does satisfy an $L^p(\Omega)$ bound independent of the domain, the question is about the behaviour of $B_1$ (see Theorem \ref{thm:alphaone}) in general domains. We point out that one avenue for improving the $L^{p}$ bounds for $B_1$ could be to replace the strong $L^{1}$ bounds for $S_j^{k}$ by weak type bounds in order to improve the operator norm bound with respect to $j$.

\subsection{Endpoint regularity in domains}
Corollary \ref{cor:intro2} follows from Theorem \ref{thm:intro}, since
\[
\| \nabla \m_1 f \|_{L^{n/(n-1)}(\Omega)} 
				\lesssim \|f\|_{L^{n/(n-1)}(\Omega)} 
				\lesssim \| f \|_{W^{1,1}(\Omega)}.
\]
Here we used the main theorem and \eqref{eq:GNSineq}. The same observation was done by \cite{CM2015} to notice that the fractional endpoint regularity problem follows from inequality \eqref{eq:KSineq} as $\alpha \geq 1$ in the full space $\mathbb{R}^{n}$. The domain case was not known before as the inequality \eqref{eq:KSineq} should have been replaced by \eqref{eq:introHKKT}. This amounts to changing the Hardy--Littlewood maximal function to the spherical maximal function in the display above. That one is not bounded in $L^{n/(n-1)}$, so the argument breaks down. However, using Theorem \ref{thm:intro}, we can complete the argument in certain domains $\Omega$.

To the best of our knowledge, the fractional endpoint regularity problem has not been studied in domains before. It is hence natural to ask
\begin{question}
What must be assumed about an open set $\Omega \subset \mathbb{R}^{n}$ so that 
\[\| \nabla \m_\alpha f \|_{L^{\alpha /(n- \alpha) }(\Omega)} \lesssim \|f\|_{W^{1,1}(\Omega)} \]
for $\alpha \in (0,1]$?
\end{question}
Our main theorem gives some information on the case $\alpha = 1$, but the remaining values of $\alpha$ remain open. The values $\alpha > 1$ can be dealt with using a spherical maximal function argument with no additional assumptions. The remaining values of $\alpha$ are probably way harder to handle as the endpoint regularity question is completely open even in the full space. 

Finally, we remark that the techniques used to get results for smooth kernels as in \cite{BRS2019} are insensitive to the ambient domain, because one does not use precise information about the maximizing radius. The arguments there only rely on sublinearity of maximal functions. Hence a $W^{1,1}$ variant of Theorem 1.1 in \cite{BRS2019} easily extends to the domain setting. Indeed, fixing $\alpha \in (0,1)$, letting $\Omega$ be any Sobolev extension domain, $\Omega_\epsilon = \{x \in \Omega: \dist(x, \Omega^c) \leq \epsilon \}$ and $m$ a local maximal function with kernel compactly supported and smooth enough as in \cite{BRS2019}, one can invoke Theorem 3 in Section 5.8.2 in \cite{Evans2010} to reduce the problem to proving
\[ \lim_{\epsilon \to 0} \sup_{h \in B(0,\epsilon/2)}  \int_{\Omega_{\epsilon}} \left \lvert \frac{  mf(x+h) - m f(x) }{|h|} \right \rvert ^{\frac{n}{n- \alpha}} \, dx \lesssim \|  f \|_{W^{1,1}(\Omega)} ^{\frac{n}{n- \alpha}} . 
\]
As $f \in W^{1,1}(\Omega)$ coincides with its extension $Ef \in W^{1,1}(\mathbb{R}^{n})$ for all $x \in \Omega$, the integral on the left hand side can be controlled by a maximal multiplier as in \cite{BRS2019} acting on $ Ef( \cdot + h) - Ef(\cdot) $. Then the claim follows from Theorem 3.1 in \cite{BRS2019} and the assumed boundedness of $E : W^{1,1}(\Omega) \to W^{1,1}(\mathbb{R}^{n})$.

\subsection{Smoothing for cube maximal functions}
An equally interesting variant of the local fractional maximal function is the one defined by taking averages over cubes instead of balls
\[M^{\Omega,\text{cube}}_{\alpha}f(x) = \sup_{r > 0, Q(x,r) \subset \Omega} r^{\alpha} \sint_{Q(x,r)} f(y) \, dy  .\]
As the faces of the cubes are completely flat, there are no $L^{p}$ bounds for the maximal function
\begin{equation}
\label{eq:cubesurf}
\sup_{r > 0} \sint_{\partial Q(x,r)} f(y) \, d \mathcal{H}^{n-1}( y ),
\end{equation}
and this was singled out as the principal reason why the methods in \cite{HKKT2015} do not extend to the case of cubical fractional maximal function. 

Although we avoid the use quantites of the type \eqref{eq:cubesurf}, our proof is also inapplicable to the cubical case. There are two obvious obstructions:
\begin{itemize}
\item Let $\Omega$ be the upper half-plane. Take $\delta> 0$ and define $f$ as the characteristic function of $[-\delta,\delta] \times [0, \delta^{s}]$ for some $s \geq 1$. Varying $s$ and sending $\delta \to 0$, we see that 
\[ \| B_1 f\|_{L^{p}} \lesssim \|f\|_{L^{p}} \]
cannot hold for any $p < \infty$.
\item As a detail in the proof, one can note that the analogues of the sets $P(y)$ from \eqref{def:pysets} defined relative to cubes might have full measure. The role of curvature, or lack of it, manifests in the $2^{j}$ factor in the statement of Proposition \eqref{prop:L1bound}.
\end{itemize}

On the other hand, it seems that the problems with the cubical maximal function are not only a matter of lack of curvature. As the remarks above show, there are domains where averages over flat surfaces cause problems. However, if the geometry of the domain is very special, this kind of phenomena can be ruled out. The following observation gives an example.

\begin{proposition}
Let $\Omega = \{(x,y) \in \mathbb{R}^{2} : x < y \}$. Then
	\[ \| \nabla  M^{\Omega,\text{cube}}_{\alpha} f \|_{L^{p}} \lesssim \|f\|_{L^{p}} \]
for all $f \in L^{p}$.
\end{proposition}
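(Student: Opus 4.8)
The plan is to exploit the very special geometry of the domain $\Omega = \{(x,y) : x < y\}$, whose boundary is the line $x = y$ at a $45^\circ$ angle. The key observation is that for this half-plane, the largest inscribed cube $Q(x,r) \subset \Omega$ centered at a point $p = (p_1, p_2)$ has its radius determined by the distance to the boundary line, but crucially the maximizing cube touches the boundary only along one face (or at a corner), and the geometry of which face touches is dictated by the fixed orientation of the boundary. First I would replicate the overall architecture used for balls: decompose $M^{\Omega,\text{cube}}_\alpha f$ into an unconstrained part, where the maximizing cube does not touch $\partial\Omega$ and the operator behaves like the global cube maximal function (for which the Kinnunen--Saksman type inequality $|\nabla M_\alpha^{\text{cube}} f| \lesssim M_{\alpha-1}^{\text{cube}} f$ holds), and a constrained part given by the linear averaging operator over the largest inscribed cube.

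The heart of the matter is the constrained part. Following Proposition \ref{prop:derivative_averaging}, differentiating the averaging operator produces a bulk term controlled by a fractional average (hence $L^p$-bounded) plus a boundary term that is an average of $f$ over the faces of the maximizing cube weighted by the variation of the distance function. The obstruction flagged in the first bullet of the preceding discussion is that for a half-\emph{space} with axis-parallel cubes, the flat face parallel to the boundary gives an uncontrollable surface average \eqref{eq:cubesurf}. The point for the \emph{diagonal} half-plane $\{x<y\}$ is that no face of an axis-parallel cube is ever parallel to the boundary line $x=y$: the boundary meets each face at a $45^\circ$ angle. Consequently the distance function $\delta$ restricted to the relevant geometry has a nonvanishing component of its gradient transverse to each face, and the surface term that arises is effectively an average over faces that are \emph{transverse} to the boundary. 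I would make this precise by computing, for $p=(p_1,p_2) \in \Omega$, that the largest inscribed cube has half-side $\delta(p)$ equal to the $\ell^\infty$ distance to the line, and that $b_p$ (the touching point) moves so that the relevant face slides along the boundary in a controlled, one-dimensional fashion.

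The main technical step is then to bound the resulting surface averaging operator in $L^p$. Because the cube faces are transverse (at $45^\circ$) to the boundary, I expect the analogue of the sets $P(y) = \{p : y \in \partial Q(p, \delta(p))\}$ to be genuinely one-codimensional curves rather than full-measure sets, precisely because the transversality forces $y$ to lie on the sliding face for only a one-parameter family of centers $p$. This is the cube analogue of Proposition \ref{prop:convexbody}, and it is where the second bullet's obstruction is circumvented: the $45^\circ$ tilt plays the role that curvature played for balls. I would parametrize $P(y)$ explicitly (it should be a piecewise-linear curve), verify it has finite and $y$-uniform one-dimensional measure after the appropriate weighting, and then run the Fubini-type dualization of Proposition \ref{prop:L1bound} to get the $L^1$ bound on the pieces, interpolating against the trivial $L^\infty$ bound exactly as in Theorem \ref{thm:largealpha} and Theorem \ref{thm:alphaone}. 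Finally, combining the constrained bound with the unconstrained estimate via the cube analogue of Lemma \ref{lemma:full} yields the claimed inequality.

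The hard part will be establishing that $P(y)$ has the right dimension and measure for the cube geometry: unlike the ball case, where convexity of $E(y)$ gave this for free, here one must track which face of the maximizing cube is active and argue that the flat faces do not conspire to produce a positive-measure set of centers sharing a common boundary point $y$. I expect the $45^\circ$ transversality of the boundary to each coordinate axis to be exactly the feature that rules this out, but verifying it requires a careful case analysis of the maximizing cube as $p$ varies, which is the genuine content of the proof.
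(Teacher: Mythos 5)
Your reduction (unconstrained part, constrained averaging operator, derivative formula with bulk plus face terms) matches the paper's, but the paper's actual proof of this proposition is far lighter than what you then propose: it notes that for this domain the decomposition in $j$ and $k$ is \emph{unnecessary}. The maximal cube centred at $p$ has half-side $r(p)=(p_2-p_1)/2$ and touches $\partial\Omega$ at the single corner $b_p=\tfrac{p_1+p_2}{2}(1,1)$, and each face is the image of an affine map; for the right face $\Phi_t(p)=(p_1+r(p),\,p_2+t\,r(p))$ with $t\in[-1,1]$ one computes $\det D\Phi_t=(1+t)/2$, the bottom face gives $(1-t)/2$, and the two far faces give determinants bounded below by $1$. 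Minkowski's integral inequality and this change of variables bound the face-average operator by $\|f\|_{L^p}\int_{-1}^{1}\bigl((1\pm t)/2\bigr)^{-1/p}\,dt$, which is finite exactly when $p>1$; the linear vanishing of the Jacobian at the contact corner is the precise quantitative form of the $45^\circ$ transversality you invoke. Your structural claim about $P(y)$ is correct and in fact immediate: $E(y)=\{p:\|y-p\|_\infty\leq r(p)\}$ is convex because $p\mapsto\|y-p\|_\infty-r(p)$ is convex, so $P(y)=\partial E(y)$ is the boundary of a convex polygon, giving the cube analogue of Proposition \ref{prop:convexbody} for free.

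There is, however, a genuine gap in your plan to ``run the dualization of Proposition \ref{prop:L1bound} and interpolate exactly as in Theorems \ref{thm:largealpha} and \ref{thm:alphaone}.'' The factor $2^{-j}$ in \eqref{eq:red_angle} comes from the weight $|y-b_x|/\delta(x)$, which vanishes at the contact point; for cubes on this domain the corresponding weight on the face with outer normal $\nu$ is $(\nu+\nabla r)\cdot e$ with $\nabla r=(-1/2,1/2)$, a nonzero constant on each face, including the two faces meeting the contact corner. So the cubical analogue of \eqref{eq:red_angle} carries no $2^{-j}$. If you then use the $L^1$ bound with the $2^{j}$ loss of Propositions \ref{prop:L1bound} and \ref{prop:generaldomain} and interpolate with the $L^\infty$ decay $2^{-(n-1)j}=2^{-j}$ of Proposition \ref{prop:infinity}, each piece contributes $\lesssim 2^{j(2-p)/p}$, and the sum over $j$ converges only for $p>2$, not for all $p>1$. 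Worse, the proof of Proposition \ref{prop:L1bound} cannot be transplanted at all: it differentiates $g(\rho)=|y-x-\rho e|-\delta(x+\rho e)$ along the contact direction $e$, and in the $\ell^\infty$ geometry this derivative is identically zero, because when $y$ lies on the right face, $P(y)$ is locally the line $p_1+p_2=2y_1$, whose direction $(1,-1)$ \emph{is} the contact direction; the contact direction is tangent to $P(y)$, not transverse. The repair is to thicken in the normal direction: locally $\|y-p\|_\infty-r(p)=y_1-(p_1+p_2)/2$ has gradient of length $1/\sqrt{2}$, so $|\{p\in\Omega_k:\ y\in\omega_j^{\epsilon-}(p)\}|\lesssim\epsilon\,\mathcal{H}^1(P_j^k(y))$ with no $2^{j}$ loss; moreover $|y-b_p|=|y_2-y_1|$ for every such $p$, so only $O(1)$ values of $k$ contribute for each $j$. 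With that improved $L^1$ bound, interpolation yields $2^{-j(p-1)/p}$ per piece and your scheme closes for all $p>1$. (Note also that Theorem \ref{thm:largealpha} is not available as a template here, since its $k$-summation uses boundedness of $\Omega$.) In short: your route can be completed, but only by replacing, not reusing, the transversality step of Proposition \ref{prop:L1bound} --- the $45^\circ$ angle must enter through the $L^1$ estimate itself, because it can no longer enter through the weight --- and even then it is much heavier than the paper's Minkowski-plus-change-of-variables argument.
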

\begin{proof}[Sketch of proof]
The reduction to the cubical analogue of \eqref{eq:red_angle} follows by the lines of the spherical proof. Then it suffices to note that the decomposition in $j$ and $k$ is unnecessary, and an $L^{p}$ bound for $p>1$ follows by Minkowski's inequality and a change of variables. 
\end{proof}
The exact behaviour of the cubical local fractional maximal function in more general domains remains an interesting open problem.

\subsection{Scalable estimates}
The method of the proof of Theorem \ref{thm:intro} forced us to prove $L^{p} \to W^{1,p}$ estimates for the derivative of the fractional maximal function. Such estimates can only hold true in bounded domains or for $\alpha = 1$, and in bounded domains they are weaker than the expected $L^{ p} \to \dot{W}^{1,\frac{np}{p-(\alpha-1)}}$ estimates, only known for $p > n/(n-1)$ by \cite{HKKT2015}. We do not pursue this possible improvement direction here, although we 
believe it to be an interesting open problem.

\bibliography{Reference}

\bibliographystyle{abbrv}

\end{document}